\newtheorem {theorem}{Theorem}[section]
\newtheorem {corollary}[theorem]{Corollary}
\newtheorem {lemma}[theorem]{Lemma}
\newtheorem {example}[theorem]{Example}
\newtheorem {definition}[theorem]{Definition}
\newtheorem {remark}[theorem]{Remark}
\def\ar{a\kern-.370em\raise.16ex\hbox{\char95\kern-0.53ex\char'47}\kern.05em}
\def\ees{{\accent"5E e}\kern-.385em\raise.2ex\hbox{\char'23}\kern-.08em}
\def\eex{{\accent"5E e}\kern-.470em\raise.3ex\hbox{\char'176}}
\def\AR{A\kern-.46em\raise.80ex\hbox{\char95\kern-0.53ex\char'47}\kern.13em}
\def\EES{{\accent"5E E}\kern-.5em\raise.8ex\hbox{\char'23 }}
\def\EEX{{\accent"5E E}\kern-.60em\raise.9ex\hbox{\char'176}\kern.1em}
\def\ow{o\kern-.42em\raise.82ex\hbox{
  \vrule width .12em height .0ex depth .075ex \kern-0.16em \char'56}\kern-.07em}
\def\OW{O\kern-.460em\raise1.36ex\hbox{
\vrule width .13em height .0ex depth .075ex \kern-0.16em \char'56}\kern-.07em}
\def\UW{U\kern-.42em\raise1.36ex\hbox{
\vrule width .13em height .0ex depth .075ex \kern-0.16em \char'56}\kern-.07em}
\def\DD{D\kern-.7em\raise0.4ex\hbox{\char '55}\kern.33em}
\newcommand{\R}{{\mathbb R}}
\def\ees{{\accent"5E e}\kern-.385em\raise.2ex\hbox{\char'23}\kern-.08em}
\def\EES{{\accent"5E E}\kern-.5em\raise.8ex\hbox{\char'23 }}
\def\ow{o\kern-.42em\raise.82ex\hbox{
\vrule width .12em height .0ex depth .075ex \kern-0.16em \char'56}\kern-.07em}
\def\OW{O\kern-.460em\raise1.36ex\hbox{
\vrule width .13em height .0ex depth .075ex \kern-0.16em \char'56}\kern-.07em}
\title[]{Existence theorems for optimal solutions in semi-algebraic optimization}
\author{Jae Hyoung Lee$^\dagger$}
\address{Department of Applied Mathematics, Pukyong National University, Busan 48513, Korea}
\email{mc7558@naver.com}
\author{GUE MYUNG LEE$^\ddagger$}
\address{Department of Applied Mathematics, Pukyong National University, Busan 48513, Korea}
\email{gmlee@pknu.ac.kr}
\author{TI\EES N S\OW N PH\d{A}M$^{*}$}
\address{Department of Mathematics, Dalat University, 1 Phu Dong Thien Vuong, Dalat, Vietnam}
\email{sonpt@dlu.edu.vn}
\thanks{$^\ddagger$Corresponding author}
\date{ \today}
\subjclass[2010]{14P05~$\cdot$14P10~$\cdot$~90C26~$\cdot$~90C30}
\keywords{Boundedness, Coercivity, Critical points, Existence of optimal solutions, Semi-Algebraic Geometry, Tangencies}
\begin{document}
\maketitle

\begin{abstract}
Consider the problem of minimizing a lower semi-continuous semi-algebraic function $f \colon \mathbb{R}^n \to \mathbb{R} \cup \{+\infty\}$ on an unbounded closed semi-algebraic set $S \subset \mathbb{R}^n.$ Employing adequate tools of semi-algebraic geometry, we first establish some properties of the tangency variety of the restriction of $f$ on $S.$ Then we derive verifiable necessary and sufficient conditions for the existence of optimal solutions of the problem as well as the boundedness from below and coercivity of the restriction of $f$ on $S.$ We also present a computable formula for the optimal value of the problem.
\end{abstract}

\section{Introduction}

The existence of optimal solutions for optimization problems has been an essential research topic in optimization theory.

It is well-known that a linear function attains its infimum on a nonempty polyhedral set if it is bounded from below on the set.

In 1956, Frank and Wolfe~\cite{Frank1956} proved that a quadratic function attains its infimum on a nonempty polyhedral set if it is bounded from below on the set. In 1982, Andronov, Belousov and Shironin~\cite{Andronov1982} (see also~\cite{Klatte2019}) showed that this result is still true if the quadratic objective function is replaced by a cubic function.

In 2002, Belousov and Klatte~\cite{Belousov2002} established the existence of optimal solutions for convex polynomial optimization problems.
In 2014, \DD inh, H\`a and Ph\d{a}m~\cite{Dinh2014-2} (see also~\cite{ PHAMTS2019-1}) proved that non-degenerate polynomial optimization problems have optimal solutions. Here, we should assume that for each considered problem, the objective function is bounded from below on the constraint set.

Very recently, for a general polynomial optimization problem, Ph\d{a}m~\cite{PHAMTS2023} provided necessary and sufficient conditions for the existence of optimal solutions of the problem as well as the boundedness from below and coercivity of the objective function on the constraint set, where the results are presented in terms of the tangency variety of the polynomials defining the problem. Since polynomials form a subclass of semi-algebraic functions, it is natural to extend these results for semi-algebraic optimization problems.

\subsection*{Contributions}
Given a lower semi-continuous semi-algebraic function $f \colon \mathbb{R}^n \rightarrow \mathbb{R} \cup \{+\infty\}$ and an unbounded closed semi-algebraic set $S \subset \mathbb{R}^n,$ we first establish some properties of the tangency variety of the restriction of $f$ on $S,$ and then we show that the following conditions can be characterized completely:
\begin{itemize}
\item $f$ is bounded from below on $S;$
\item $f$ attains its infimum on $S;$ and
\item $f$ is coercive on $S.$
\end{itemize}
Furthermore, we derive a computable formula for the optimal value $\inf_{x \in S} f(x).$

To be concrete, we study only semi-algebraic functions and sets. Analogous results, with essentially identical proofs, also hold for functions and sets definable in an o-minimal structure (see~\cite{Dries1996} for more on the subject). However, to lighten the exposition, we do not pursue this extension here.

The rest of this paper is organized as follows. Some definitions and preliminary results from variational analysis and semi-algebraic geometry are
recalled in Section~\ref{Section2}. Tangencies are introduced and studied in Section~\ref{Section3}. The main results are given in Section~\ref{Section4}. Finally, several examples are provided in Section~\ref{Section6}.

\section{Preliminaries} \label{Section2}

Throughout this work we shall consider the Euclidean vector space ${\Bbb R}^n$ endowed with its canonical scalar product $\langle \cdot, \cdot \rangle$ and we shall denote its associated norm $\| \cdot \|.$ The closed ball and the sphere centered at the origin $0 \in \mathbb{R}^n$ of radius $R > 0$ will be denoted by $\mathbb{B}_{R}$ and $\mathbb{S}_{R},$ respectively. We will adopt the convention that $\inf \emptyset = +\infty$ and $\sup \emptyset = -\infty.$

For a function $f \colon \mathbb{R}^n \rightarrow \mathbb{R} \cup \{+\infty\},$ we denote its {\em effective domain} and {\em epigraph} by,  respectively,
\begin{eqnarray*}
\mathrm{dom} f &:=& \{ x \in \mathbb{R}^n \ | \ f(x) < +\infty  \},\\
\mathrm{epi} f &:=& \{ (x, \alpha) \in \mathbb{R}^n \times \mathbb{R} \ | \ f(x) \le \alpha \}.
\end{eqnarray*}
The function $f$ is said to be {\em lower semi-continuous} if for each $x \in \mathbb{R}^n$ the inequality $\liminf_{x' \to {x}} f(x') \ge f({x})$ holds.
The {\em indicator function} of a set $S \subset \mathbb{R}^n,$ denoted $\delta_S,$ is defined by
$$\delta_S(x) :=
\begin{cases}
0 & \textrm{ if } x \in S, \\
+\infty & \textrm{ otherwise.}
\end{cases}$$

\subsection{Normals and subdifferentials}
Here we recall the notions of the normal cones to sets and the subdifferentials of real-valued functions used in this paper. The reader is referred to~\cite{Mordukhovich2006, Mordukhovich2018, Rockafellar1998} for more details.

\begin{definition}{\rm Consider a set $S\subset\mathbb{R}^n$ and a point ${x} \in S.$
\begin{enumerate}
\item[(i)] The {\em regular normal cone} (known also as the {\em prenormal} or {\em Fr\'echet normal cone}) $\widehat{N}({x}; S)$ to
$S$ at ${x}$ consists of all vectors $v\in\mathbb{R}^n$ satisfying
\begin{eqnarray*}
\langle v, x' - {x} \rangle &\le& o(\|x' -  {x}\|) \quad \textrm{ as } \quad x' \to {x} \quad \textrm{ with } \quad x' \in S.
\end{eqnarray*}

\item[(ii)] The {\em limiting normal cone} (known also as the {\em basic} or {\em Mordukhovich normal cone}) $N({x}; S)$ to $S$ at ${x}$ consists of all vectors $v \in \mathbb{R}^n$ such that there are sequences $x_k \to {x}$ with $x_k \in S$ and $v_k \rightarrow v$ with $v_k \in \widehat N(x_k; S).$
\end{enumerate}
}\end{definition}

If $S$ is a manifold of class $C^1,$ then for every point $x \in S,$ the normal cones $\widehat{N}({x}; S)$ and $N({x}; S)$ are equal to the normal space to $S$ at ${x}$ in the sense of differential geometry; see \cite[Example~6.8]{Rockafellar1998}. In particular, for all $t > 0$ and all $x \in \mathbb{S}_t,$ we have $N(x; \mathbb{S}_t) = \{\mu x \ | \ \mu \in \mathbb{R}\}.$

Functional counterparts of normal cones are subdifferentials.
\begin{definition}{\rm
Consider a function $f\colon\mathbb{R}^n \to \mathbb{R} \cup \{+\infty\}$ and a point ${x} \in \mathrm{dom} f.$ The {\em limiting} and {\em horizon subdifferentials} of $f$ at ${x}$ are defined respectively by
\begin{eqnarray*}
\partial f({x}) &:=& \big\{v\in\mathbb{R}^n\;\big|\;(v,-1)\in N\big(({x}, f({x}));\mathrm{epi} f\big)\big\}, \\
\partial^\infty f({x}) &:=& \big\{v\in\mathbb{R}^n\;\big|\;(v,0)\in N\big(({x}, f({x}));\mathrm{epi} f \big)\big\}.
\end{eqnarray*}
}\end{definition}

In \cite{Mordukhovich2006, Mordukhovich2018, Rockafellar1998} the reader can find equivalent analytic descriptions of the limiting subdifferential $\partial f({x})$ and comprehensive studies of it and related constructions. For convex $f,$ this subdifferential coincides with the convex subdifferential. Furthermore, if the function $f$ is of class $C^1,$ then $\partial f({x}) = \{\nabla f({x})\}.$ The horizon subdifferential $\partial^\infty f({x})$ plays an entirely different role--it detects horizontal ``normal'' to the epigraph--and it plays a decisive role in subdifferential calculus.

\begin{lemma} \label{Lemma23}
For any set $S \subset \mathbb{R}^n$ and point ${x} \in S,$ we have the representations
$$\partial \delta_S({x}) =  \partial^\infty \delta_S({x})  = N ({x}; S).$$
\end{lemma}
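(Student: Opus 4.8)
The plan is to reduce everything to a single computation of the limiting normal cone of $\mathrm{epi}\, \delta_S$ and then to read off the two subdifferentials directly from their definitions. The first observation is that the epigraph has the simple product form
$$\mathrm{epi}\, \delta_S = S \times [0, +\infty),$$
since $\delta_S(x) \le \alpha$ holds precisely when $x \in S$ and $\alpha \ge 0$. Moreover, for $x \in S$ we have $\delta_S(x) = 0$, so the base point at which all the relevant normal cones are computed is exactly $(x, 0)$.

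Next I would invoke the product rule for limiting normal cones, namely
$$N\big((x, 0); S \times [0, +\infty)\big) = N(x; S) \times N\big(0; [0, +\infty)\big),$$
which follows from the corresponding product formula for regular normal cones together with a passage to the limit in the defining sequences (see, e.g., \cite{Rockafellar1998}). Since $[0, +\infty)$ is convex, its limiting and regular normal cones at $0$ coincide and equal $(-\infty, 0]$. Hence
$$N\big((x, 0); \mathrm{epi}\, \delta_S\big) = N(x; S) \times (-\infty, 0].$$

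With this description in hand the two equalities follow at once. A vector $v$ lies in $\partial \delta_S(x)$ if and only if $(v, -1) \in N(x; S) \times (-\infty, 0]$, which amounts to $v \in N(x; S)$ together with the automatically satisfied condition $-1 \le 0$; thus $\partial \delta_S(x) = N(x; S)$. Likewise $v \in \partial^\infty \delta_S(x)$ if and only if $(v, 0) \in N(x; S) \times (-\infty, 0]$, i.e. $v \in N(x; S)$ and $0 \le 0$; thus $\partial^\infty \delta_S(x) = N(x; S)$. Combining the two identities gives the claim.

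The step I expect to require the most care is the product rule for the limiting normal cone. While the inclusion $N(x; S) \times N(0; [0,+\infty)) \subseteq N((x,0); S \times [0,+\infty))$ is routine from the sequential definition, the reverse inclusion---that every limiting normal to the product splits as a pair of limiting normals to the two factors---relies on the regular-normal-cone product identity $\widehat{N}((x,y); A \times B) = \widehat{N}(x; A) \times \widehat{N}(y; B)$ as its starting point, after which one extracts convergent subsequences in each factor. As this is a standard fact of variational analysis, I would cite it rather than reprove it, and the rest of the argument is purely a matter of unwinding the definitions.
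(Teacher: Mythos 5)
Your proof is correct, but it takes a genuinely different route from the paper, for the simple reason that the paper does not prove this lemma at all: its ``proof'' is a one-line citation to Proposition~1.19 of \cite{Mordukhovich2018}, where exactly these representations are established. Your argument, by contrast, is a self-contained derivation: you identify $\mathrm{epi}\,\delta_S = S \times [0,+\infty)$, apply the product formula for limiting normal cones to get $N\big((x,0); S \times [0,+\infty)\big) = N(x;S) \times (-\infty,0]$, and then read off both $\partial \delta_S(x)$ and $\partial^\infty \delta_S(x)$ directly from the definitions. Every step checks out. In particular, the product rule (Proposition~6.41 in \cite{Rockafellar1998}) is valid for arbitrary, not necessarily closed, factors under the sequential definition of the limiting normal cone used in this paper: the inclusion ``$\subseteq$'' follows because any sequence of regular normals to the product splits componentwise via the regular-normal product identity, and ``$\supseteq$'' follows by pairing the two approximating sequences; note that no subsequence extraction is actually needed, since convergence in a product is componentwise. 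What your route buys is transparency -- the lemma is revealed as an elementary consequence of the product structure of the epigraph, and the only nontrivial ingredient is a standard fact you correctly isolate and cite. What the paper's route buys is brevity: the cited proposition is precisely the statement of the lemma, so nothing needs to be verified. Either is acceptable; yours would be the better choice if one wanted the paper to be self-contained modulo textbook facts about normal cones.
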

\begin{proof}
See \cite[Proposition~1.19]{Mordukhovich2018}.
\end{proof}

\begin{theorem}[Fermat rule]\label{FermatRule}
Consider a lower semi-continuous function $f \colon\mathbb{R}^n \to \mathbb{R} \cup \{+\infty\}$ and a closed subset $S$ of $\mathbb{R}^n.$
If ${x} \in \mathrm{dom} f \cap S$ is a local minimizer of $f$ on $S$ and the qualification condition
\begin{eqnarray*}
\partial^\infty f({x}) \cap \big( -N({x}; S)\big) &=& \{0\}
\end{eqnarray*}
is valid, then $0 \in \partial f({x}) + N({x}; S).$
\end{theorem}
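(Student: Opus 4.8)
The plan is to reduce the constrained problem to an unconstrained one by means of the indicator function, and then combine the generalized Fermat rule with the limiting subdifferential sum rule. First I would observe that since $x$ is a local minimizer of $f$ on $S$, it is a local (in fact unconstrained) minimizer over $\mathbb{R}^n$ of the sum $g := f + \delta_S$: for $x'$ close to $x$ we have $g(x') = f(x') \ge f(x) = g(x)$ whenever $x' \in S$, and $g(x') = +\infty$ otherwise. Because $f$ is lower semi-continuous and $S$ is closed (so that $\delta_S$ is lower semi-continuous), the sum $g$ is lower semi-continuous. The generalized Fermat rule for unconstrained minimizers then yields $0 \in \partial g(x)$: at a local minimizer the regular subdifferential contains the origin, and since the regular subdifferential is contained in the limiting subdifferential, $0 \in \partial g(x)$ follows.

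The second step is to apply the sum rule for limiting subdifferentials to $g = f + \delta_S$. This rule asserts that under the qualification condition $\partial^\infty f(x) \cap \big(-\partial^\infty \delta_S(x)\big) = \{0\}$ one has the inclusion $\partial(f + \delta_S)(x) \subset \partial f(x) + \partial \delta_S(x)$. Invoking Lemma~\ref{Lemma23}, which gives $\partial \delta_S(x) = \partial^\infty \delta_S(x) = N(x; S)$, the qualification condition becomes exactly the stated hypothesis $\partial^\infty f(x) \cap \big(-N(x; S)\big) = \{0\}$. Chaining the inclusions then produces $0 \in \partial g(x) \subset \partial f(x) + N(x; S)$, which is the claimed conclusion.

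The routine checks—lower semi-continuity of $g$ and the fact that $x$ remains a local minimizer after adding the indicator—are immediate, so the whole argument is carried by two cited facts. The generalized Fermat rule is essentially a definitional observation about the regular subdifferential at a minimizer. The real weight falls on the limiting sum rule, which is a substantial theorem of variational analysis whose proof rests on the extremal principle. I expect this to be the main obstacle in any self-contained treatment; here, however, it may be quoted directly from \cite{Mordukhovich2006, Mordukhovich2018}, and the qualification hypothesis is imposed precisely to license its use.
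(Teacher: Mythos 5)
Your proof is correct and follows essentially the same route as the paper, which simply cites \cite[Propositions~1.19~and~1.30 and Theorem~2.19]{Mordukhovich2018}: those three results are precisely the indicator-function identity $\partial\delta_S(x)=\partial^\infty\delta_S(x)=N(x;S)$, the generalized Fermat rule for the unconstrained minimizer of $f+\delta_S$, and the limiting subdifferential sum rule under the stated qualification condition. You have merely unpacked the paper's one-line citation into the explicit argument it encodes.
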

\begin{proof}
This follows directly from \cite[Propositions~1.19~and~1.30 and Theorem~2.19]{Mordukhovich2018}.
\end{proof}

\subsection{Semi-algebraic geometry}

Now, we recall some notions and results of semi-algebraic geometry, which can be found in~\cite{Bochnak1998} and \cite[Chapter~1]{HaHV2017}.

\begin{definition}{\rm
A subset $S$ of $\mathbb{R}^n$ is called {\em semi-algebraic} if it is a finite union of sets of the form
$$\{x \in \mathbb{R}^n \ | \  f_i(x) = 0, \ i = 1, \ldots, p; f_i(x) > 0, \ i = p + 1, \ldots, q\},$$
where all $f_{i}$ are polynomials.
In other words, $S$ is a union of finitely many sets, each defined by finitely many polynomial equalities and inequalities.

A map $f \colon S \rightarrow {\mathbb{R}^m}$ is said to be {\em semi-algebraic} if its graph
\begin{eqnarray*}
\{ (x, y) \in S \times \mathbb{R}^m \ | \ y =  f(x) \}
\end{eqnarray*}
is a semi-algebraic set.
}\end{definition}


A major fact concerning the class of semi-algebraic sets is its stability under linear projections.

\begin{theorem}[Tarski--Seidenberg theorem] \label{TarskiSeidenbergTheorem}
The image of any semi-algebraic set $S \subset \mathbb{R}^n$ under a projection to any linear subspace of $\mathbb{R}^n$ is a semi-algebraic set.
\end{theorem}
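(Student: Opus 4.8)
The plan is to reduce the statement to a single one-dimensional elimination step and then to carry out that step through a parametrized analysis of the signs of univariate polynomials.

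First I would reduce to a coordinate projection that forgets a single variable. Any projection onto a $d$-dimensional linear subspace $L \subset \mathbb{R}^n$ can be written, after composing with a suitable orthogonal change of coordinates, as the projection $\mathbb{R}^n \to \mathbb{R}^d$ onto the first $d$ coordinates; since a linear isomorphism and its inverse have polynomial components, and the preimage of a semi-algebraic set under a map with polynomial components is again semi-algebraic (one simply substitutes the polynomial components into the defining polynomials), such a change of coordinates carries semi-algebraic sets to semi-algebraic sets. Moreover the projection onto the first $d$ coordinates factors as a composition of $n - d$ projections, each forgetting exactly one coordinate, so by iteration it suffices to prove the theorem for the single map $\pi \colon \mathbb{R}^n \to \mathbb{R}^{n-1}$, $(x', x_n) \mapsto x'$. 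Because $\pi$ commutes with finite unions and every semi-algebraic set is a finite union of sets of the form displayed in the definition, I may further assume $S$ is cut out by finitely many conditions $p_i(x', x_n) \, \sigma_i \, 0$ with $\sigma_i \in \{=, >\}$ and $p_i \in \mathbb{R}[x'][x_n]$.

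Regarding each $p_i$ as a polynomial in the single variable $x_n$ with coefficients in $\mathbb{R}[x']$, the task becomes to show that
$$\pi(S) = \{x' \in \mathbb{R}^{n-1} : \exists\, x_n \in \mathbb{R} \text{ with } p_i(x', x_n) \, \sigma_i \, 0 \text{ for all } i\}$$
is semi-algebraic. In logical terms this is exactly the elimination of one existential quantifier, so what is really at stake is quantifier elimination for the ordered field of real numbers. For each fixed $x'$ the real roots of the polynomials $p_i(x', \cdot)$ partition the $x_n$-axis into finitely many points and open intervals, on each of which every $p_i$ has constant sign; whether the prescribed sign pattern $(\sigma_i)$ is realized by some $x_n$ can then be read off directly from this arrangement.

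The heart of the argument, and the main obstacle, is to describe this root arrangement uniformly as the parameter $x'$ varies. The key point is that the combinatorial type of the arrangement—the number of real roots of each $p_i$, their multiplicities, their relative order, and the sign of each $p_i$ on each resulting interval—is constant as $x'$ ranges over each stratum of a finite semi-algebraic partition of $\mathbb{R}^{n-1}$, and that this partition is governed by the signs of a finite, explicitly constructible family of auxiliary polynomials in $x'$ alone, namely the leading coefficients, discriminants, resultants, and subresultant coefficients of the $p_i$ and their derivatives. Establishing this is the technical core; it is handled most cleanly by the theory of subresultants, equivalently by a Sturm--Tarski sign-counting argument, which expresses the number of real roots satisfying prescribed sign conditions as a semi-algebraic function of the coefficients. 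Granting such a description, $\pi(S)$ is the union of those strata on which a realizing $x_n$ exists, hence semi-algebraic, and iterating the one-coordinate case completes the proof.
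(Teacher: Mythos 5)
The paper offers no proof of this theorem to compare against: it is recalled as classical background, with the argument deferred to the cited references \cite{Bochnak1998} and \cite{HaHV2017}. Judged on its own terms, the reductions in your proposal are correct and standard: an orthogonal change of coordinates is a linear isomorphism whose inverse is polynomial, so it preserves semi-algebraicity in both directions; the coordinate projection onto the first $d$ coordinates factors into single-coordinate projections; and projection commutes with finite unions, so one may assume $S$ is a basic set given by conditions $p_i(x', x_n) \,\sigma_i\, 0$. Your identification of what remains---elimination of one existential quantifier over the reals---and of the relevant tools (leading coefficients, discriminants, resultants, subresultants, Sturm--Tarski sign counting) is also exactly right, and it is the route taken in the textbook treatments the paper cites.

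The weakness is that the proposal stops precisely where the theorem begins. The assertion that the combinatorial type of the root arrangement of the $p_i(x',\cdot)$ is constant on the cells of a partition of $\mathbb{R}^{n-1}$ cut out by the signs of finitely many explicitly constructible polynomials in $x'$ alone \emph{is} the Tarski--Seidenberg theorem in its essential one-variable form; everything preceding it is routine bookkeeping. Writing that this core ``is handled most cleanly by the theory of subresultants'' names the correct tool but does not carry out the argument: one must cope with the fact that the degrees of the $p_i(x',\cdot)$ and of their signed remainder sequences drop on subvarieties of the parameter space (which is exactly why naive Sturm sequences do not vary polynomially in $x'$ and why subresultants, or a Cohen--H\"ormander case tree, are required), and one must then verify that realizability of the sign pattern $(\sigma_i)$ by some $x_n$ is determined by the resulting discrete data attached to each cell. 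If the ground rules allow you to cite Sturm--Tarski/subresultant theory as known, your outline completes to the standard proof; as a self-contained argument, it has a genuine gap at exactly this step, and the gap is not incidental---it carries the entire content of the theorem.
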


\begin{remark}{\rm
As an immediate consequence of the Tarski--Seidenberg Theorem, we get semi-algebraicity of any set $\{ x \in A : \exists y \in B,  (x, y) \in C \},$  provided that $A ,  B,$  and $C$  are semi-algebraic sets in the corresponding spaces. Also, $\{ x \in A : \forall y \in B,  (x, y) \in C \}$ is a semi-algebraic set as its complement is the union of the complement of $A$  and the set $\{ x \in A : \exists y \in B,  (x, y) \not\in C \}.$ Thus, if we have a finite collection of semi-algebraic sets, then any set obtained from them with the help of a finite chain of quantifiers is also semi-algebraic.
}\end{remark}

\begin{definition}{\rm
Let $S, T$ and $T'$ be semi-algebraic sets, $T' \subset T,$ and let $f \colon S \to T$ be a continuous semi-algebraic map. A {\em semi-algebraic trivialization of $f$ over $T',$} with fibre $F,$ is a semi-algebraic homeomorphism $h \colon F \times T' \to f^{-1}(T'),$ such that $f \circ h$ is the projection map $F \times T' \to T', (x, t) \mapsto t.$ We say that the semi-algebraic trivialization $h$ is {\em compatible} with a subset
$S'$ of $S$ if there is a subset $F'$ of $F$ such that $h(F' \times T') = S' \cap f^{-1}(T').$
}\end{definition}

\begin{theorem}[Hardt's semi-algebraic triviality]\label{HardtTheorem}
Let $S, T$ be two semi-algebraic sets, $f \colon S \to T$ a continuous semi-algebraic map, $\{S_i\}_{i = 1, \ldots, p}$ a finite family of semi-algebraic subsets of $S.$ Then there exists a finite partition of $T$ into semi-algebraic sets $T = \cup_{j = 1}^q T_j$ and, for each $j$ with $f^{-1}(T_j)\neq\emptyset,$ a semi-algebraic trivialization $h_j \colon F_j \times T_j \to f^{-1}(T_j)$ of $f$ over $T_j,$ compatible with $S_i,$ for $i = 1, \ldots, p.$
\end{theorem}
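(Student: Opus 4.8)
The plan is to reduce the statement to the case of a coordinate projection and then exploit a cylindrical algebraic decomposition (CAD) of the source adapted to all the data, with the Tarski--Seidenberg Theorem (Theorem~\ref{TarskiSeidenbergTheorem}) as the fundamental tool. First I would replace $f$ by a projection. Writing $S \subseteq \mathbb{R}^N$ and $T \subseteq \mathbb{R}^M$, the graph $\Gamma := \{(x, f(x)) \mid x \in S\} \subseteq \mathbb{R}^N \times \mathbb{R}^M$ is semi-algebraic and homeomorphic to $S$ via $x \mapsto (x, f(x))$, and under this identification $f$ becomes the restriction to $\Gamma$ of the canonical projection $\pi \colon \mathbb{R}^N \times \mathbb{R}^M \to \mathbb{R}^M$, while each $S_i$ corresponds to a semi-algebraic subset $\Gamma_i \subseteq \Gamma$. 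Thus it suffices to trivialize a projection, which is exactly what the CAD machinery is designed to handle.

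Next I would construct, using Theorem~\ref{TarskiSeidenbergTheorem} and the quantifier elimination it underlies, a cylindrical algebraic decomposition of $\mathbb{R}^N \times \mathbb{R}^M$ that is cylindrical over a CAD of $\mathbb{R}^M$ and compatible with $\Gamma$ and each $\Gamma_i$: a finite partition into semi-algebraic cells, each a graph or a band stacked over a base cell, such that $\Gamma$ and every $\Gamma_i$ is a union of cells. I would then let $\{T_j\}_{j=1}^q$ be the base cells meeting $T$. Over a fixed $T_j$, the cells of $\Gamma$ lying above it are precisely the regions strictly between consecutive continuous semi-algebraic sections $\xi_1 < \cdots < \xi_k \colon T_j \to \mathbb{R}$, together with the sections themselves.

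The trivialization over $T_j$ would then be built fiberwise. Fixing a basepoint $t_0 \in T_j$ and setting $F_j := \pi^{-1}(t_0) \cap \Gamma$, I would define $h_j \colon F_j \times T_j \to \pi^{-1}(T_j) \cap \Gamma$ as the fiber-preserving semi-algebraic homeomorphism that carries, order-preservingly and affinely in the last coordinate between consecutive sections, the stack over $t_0$ onto the stack over $t$; by construction $\pi \circ h_j$ is the projection onto $T_j$. Because the decomposition is adapted to the $\Gamma_i$, each $\Gamma_i \cap \pi^{-1}(T_j)$ is a union of cells, hence of the form $h_j(F_i' \times T_j)$ for a suitable $F_i' \subseteq F_j$, which yields the required compatibility with every $S_i$.

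I expect the main obstacle to be twofold: producing a single CAD adapted to all of the sets $\Gamma, \Gamma_1, \ldots, \Gamma_p$ simultaneously (the substantive content, powered by Theorem~\ref{TarskiSeidenbergTheorem}), and then verifying that the fiberwise straightening assembles into a genuine \emph{semi-algebraic homeomorphism}---continuous up to the cell boundaries, with semi-algebraic inverse, and respecting the cell structure---so that finiteness of the partition of $T$ and compatibility with every $S_i$ hold at once.
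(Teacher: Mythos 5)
This statement is one of the recalled preliminaries of the paper: no proof is given there, the result being quoted from the semi-algebraic geometry literature (see \cite{Bochnak1998} and \cite[Chapter~1]{HaHV2017}), so your attempt must be measured against the known proofs of Hardt's theorem. Your reduction of $f$ to a coordinate projection via its graph is correct and standard, and for one-dimensional fibers ($N=1$) the CAD argument you sketch does work. The genuine gap is the step you dismiss at the end as a verification: when the fibers have dimension at least two, the cylindrical decomposition above a base cell is an iterated tower of graphs and bands, and the fiberwise map you prescribe (cell-by-cell affine matching between consecutive sections) is in general \emph{not} continuous across the boundaries between fiber cells. This failure is not a technicality; it is precisely the hard content of the theorem.

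Here is a concrete failure. Take $\Gamma=\{(t,y_1,y_2)\ |\ t>0,\ y_1\ge 0,\ y_2\ge 0,\ y_1y_2\le t\}$, $\pi(t,y_1,y_2)=t$, base cell $T_j=(0,+\infty)$, basepoint $t_0=1$. The natural CAD adapted to $\Gamma$ (cylindrical in the order $t,y_1,y_2$) has, over the level-one cell $\{y_1>0\}$, the bounded band $\{0<y_2<t/y_1\}$, and over the level-one cell $\{y_1=0\}$ the unbounded band $\{y_2>0\}$. Your prescription sends the bounded band over $t_0$ to the one over $t$ by the scaling $y_2\mapsto t\,y_2$, whereas on the unbounded band the bounding sections ($0$ and $+\infty$) are the same for every $t$, so any matching determined by the cell data alone is independent of $t$. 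Hence for fixed $t\ne 1$ and $y_2^*>0$ one gets $h_j\big((y_1,y_2^*),t\big)=(y_1,t\,y_2^*)\to(0,t\,y_2^*)$ as $y_1\to 0^+$, while $h_j\big((0,y_2^*),t\big)=(0,y_2^*)$: the assembled map is discontinuous, even though this family is obviously trivial (via $(y_1,y_2)\mapsto(y_1,t\,y_2)$ on the whole fiber). The same phenomenon occurs whenever sections over a band merge, or escape to infinity, at the band's boundary. The moral is that no construction treating each cell independently, using only its bounding sections, can produce a continuous trivialization; the homeomorphisms must be built coherently on closures of cells, and arranging this is what forces the delicate induction on dimension in Hardt's original proof and in \cite{Bochnak1998}. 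So your outline identifies the right reduction and correctly locates the obstacle, but the obstacle \emph{is} the theorem, and the proposal does not overcome it.
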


The following well-known lemmas will be of great importance for us.

\begin{lemma} [monotonicity lemma] \label{MonotonicityLemma}
Let $f \colon (a, b) \rightarrow \mathbb{R}$ be a semi-algebraic function. Then there are finitely many points $a =: t_0 < t_1 < \cdots < t_p := b$ such that for each $i = 0, \ldots, p - 1,$ the restriction of $f$ to the interval $(t_i, t_{i + 1})$ is analytic, and either constant, or strictly increasing or strictly decreasing.
\end{lemma}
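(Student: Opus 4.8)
The plan is to reduce everything to the elementary structure of semi-algebraic subsets of the line, and then to promote analyticity and monotonicity from pointwise statements about the derivative to statements on finitely many intervals. I would first record the basic one-variable fact: \emph{every semi-algebraic subset of $\mathbb{R}$ is a finite union of points and open intervals.} This is immediate from the definition, since such a set is a finite Boolean combination of solution sets of conditions $g(t) \bowtie 0$ with $g$ a univariate polynomial; the finitely many real zeros of all the $g$'s occurring cut $(a,b)$ into finitely many open intervals on each of which every such condition has constant truth value. Consequently any \emph{infinite} semi-algebraic subset of $(a,b)$ contains an open interval.

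Next I would establish that $f$ is analytic off a finite set. The graph $\Gamma := \{(t, f(t)) : t \in (a,b)\}$ is semi-algebraic and, since each vertical line meets it in at most one point, has empty interior in $\mathbb{R}^2$; writing $\Gamma$ as a finite union of basic semi-algebraic sets and discarding the (necessarily empty) pieces carrying no equality constraint, one sees that $\Gamma \subseteq \{P = 0\}$ for some nonzero $P \in \mathbb{R}[x,y]$, which we may take to have no repeated factor as a polynomial in $y$ over $\mathbb{R}(x)$, so that its $y$-discriminant $\Delta(x)$ is a nonzero polynomial. The exceptional set $E := \{t \in (a,b) : P(t, f(t)) \equiv 0 \text{ in } y \text{ or } \partial_y P(t, f(t)) = 0\}$ is semi-algebraic by Tarski--Seidenberg (Theorem~\ref{TarskiSeidenbergTheorem}). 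At any $t_0 \notin E$ at which $f$ is continuous, the implicit function theorem produces an analytic solution $y = g(t)$ of $P = 0$ near $t_0$ with $g(t_0) = f(t_0)$, and continuity forces $f = g$ nearby; hence $f$ is analytic at $t_0$.

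The hard part will be to see that $E$, together with the discontinuity set of $f$, is finite. By the one-variable fact it suffices to rule out an open interval $I$ on which, say, $\partial_y P(t, f(t)) = 0$; on such an $I$ the value $f(t)$ is a repeated root of $P(t, \cdot)$, forcing $\Delta$ to vanish on $I$, which is impossible since $\Delta \not\equiv 0$. Controlling the discontinuity set requires the parallel observation that, off the finite zero sets of $\Delta$ and of the leading $y$-coefficient of $P$, the finitely many roots of $P(t,\cdot)$ vary analytically and stay separated, so the semi-algebraic branch-selection defining $f$ must coincide with a single analytic root on each interval of a finite partition. Pinning down this finiteness via the discriminant is the technical heart of the argument.

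Finally I would obtain monotonicity. On each open interval where $f$ is analytic, $f'$ is semi-algebraic, so the sets $\{f' > 0\}$, $\{f' < 0\}$, and $\{f' = 0\}$ are semi-algebraic subsets of the line and hence, by the one-variable fact, finite unions of points and intervals. Refining the partition so that $f'$ has constant strict sign or vanishes identically on each resulting open subinterval, I conclude that $f$ is strictly increasing, strictly decreasing, or (by analyticity and connectedness) constant there. Collecting all the partition points produced above yields the desired finite set $a = t_0 < t_1 < \cdots < t_p = b$.
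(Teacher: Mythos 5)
The paper never proves this lemma: it is quoted as a classical fact of semi-algebraic geometry, with the proof delegated to \cite{Bochnak1998} and \cite[Chapter~1]{HaHV2017}, so there is no in-paper argument to compare yours against line by line. What you have written is, in substance, the standard proof from that literature (it is the one-variable core of cylindrical algebraic decomposition): trap the graph inside an algebraic curve $\{P=0\}$ with $P$ squarefree in $y$; discard the finitely many parameters $t$ where the $y$-discriminant, the leading $y$-coefficient, or all $y$-coefficients of $P$ vanish; observe that over each remaining interval the real roots of $P(t,\cdot)$ form finitely many disjoint analytic branches; use the fact that a semi-algebraic subset of $\mathbb{R}$ is a finite union of points and open intervals to conclude that $f$ agrees with a single branch on each piece of a finite partition; and finally refine by the sign of the derivative $f'$, which is semi-algebraic by Tarski--Seidenberg. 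This is correct, and I see no genuine gap. Two minor points. First, your intermediate stage --- analyticity via the implicit function theorem at continuity points outside $E$ --- is logically redundant: the branch-selection argument you then invoke to control the discontinuity set already yields analyticity off a finite set on its own, so the proof streamlines if you run branch selection from the start. Second, ``pieces carrying no equality constraint'' should read ``no equality constraint given by a not-identically-zero polynomial,'' since a trivial equation $0=0$ imposes nothing; the same empty-interior argument (such a piece would be open, contradicting that the graph meets each vertical line in at most one point) disposes of that case, and similarly the pure-$x$ factors of $P$ only contribute finitely many bad parameters, as you implicitly use when invoking the leading coefficient and discriminant.
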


\begin{lemma}[growth dichotomy lemma] \label{GrowthDichotomyLemma}
Let $f \colon (0, \epsilon) \rightarrow {\mathbb R}$ be a semi-algebraic function with $f(t) \ne 0$ for all $t \in (0, \epsilon).$ Then there exist constants $a \ne 0$ and $\alpha \in {\mathbb Q}$ such that $f(t) = at^{\alpha} + o(t^{\alpha})$ as $t \to 0^+.$
\end{lemma}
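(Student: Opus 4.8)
The plan is to show that $f$ agrees, near $0^+$, with a single branch of an algebraic function, and then to read off its leading term from a Puiseux expansion.

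First I would apply the monotonicity lemma (Lemma~\ref{MonotonicityLemma}) to shrink $\epsilon$ so that $f$ is analytic and either constant or strictly monotone on $(0,\epsilon)$; in particular $f$ is continuous there, and since $f$ never vanishes it keeps a constant sign. This reduction guarantees that we are dealing with a single, well-behaved branch rather than a multivalued object. Next I would produce a polynomial relation satisfied by $f$. The graph $\Gamma := \{(t, f(t)) \mid t \in (0,\epsilon)\}$ is a semi-algebraic subset of $\mathbb{R}^2$ of dimension one, hence it is contained in the zero set of some nonzero polynomial $P \in \mathbb{R}[t,y]$, since a one-dimensional semi-algebraic set cannot be Zariski dense in the plane. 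Writing $P(t,y) = \sum_{j=0}^d a_j(t)\, y^j$ and choosing $P$ of least degree in $y$, we may assume $a_d \not\equiv 0$ and $d \ge 1$, so that $f(t)$ is, for each $t$, a root of the one-variable polynomial $P(t, \cdot)$ with $P(t, f(t)) \equiv 0$ on $(0,\epsilon)$.

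I would then invoke the Newton--Puiseux theorem: the roots $y$ of $P(t,y)=0$ admit, for $t>0$ small, convergent expansions in fractional powers $y = \sum_{k \ge k_0} c_k\, t^{k/N}$ for some positive integer $N$. By continuity and single-valuedness, $f$ must coincide on a sufficiently small interval $(0,\epsilon')$ with one such branch. Since $f(t) \ne 0$, this series is not identically zero; letting $a\, t^\alpha$ be its lowest-order term with $a \ne 0$ and $\alpha = k_0/N \in \mathbb{Q}$, we obtain $f(t) = a t^{\alpha} + o(t^{\alpha})$ as $t \to 0^+$, as required.

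The main obstacle is the passage from semi-algebraicity to a genuine algebraic equation and the matching of $f$ with a single Puiseux branch: one must argue that the one-dimensional graph lies on an algebraic curve and that the continuous function $f$ selects exactly one of the finitely many convergent fractional-power-series solutions near $0$. Once the Newton--Puiseux expansion is in hand, extracting the leading term and verifying $\alpha \in \mathbb{Q}$ and $a \ne 0$ is immediate.
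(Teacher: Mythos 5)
Your proof is sound, but there is nothing in the paper to compare it against: the growth dichotomy lemma is listed in Section~2 among the ``well-known lemmas'' of semi-algebraic geometry and is quoted without proof from the cited references (Bochnak--Coste--Roy and H\`a--Ph\d{a}m). What you have written is essentially the standard textbook argument those references rely on, and its three steps are all correct: (1) shrinking $\epsilon$ via the monotonicity lemma so that $f$ is continuous (indeed analytic) and of constant sign; (2) trapping the one-dimensional semi-algebraic graph inside an algebraic curve $P(t,y)=0$, where $\deg_y P \ge 1$ is automatic because a nonzero polynomial in $t$ alone cannot vanish on all of $(0,\epsilon)$; (3) invoking Newton--Puiseux and matching $f$ with a single branch. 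The one step you leave implicit --- the branch selection, which you yourself flag as the main obstacle --- deserves to be written out, since it is the only place where connectedness genuinely enters: the finitely many Puiseux branches $y_1,\dots,y_m$ of $P$ above $(0,\delta)$ that are distinct as series differ by nonzero Puiseux series, hence are pairwise disjoint as functions once $\delta$ is small enough; the sets $A_i=\{t\in(0,\delta) \mid f(t)=y_i(t)\}$ are then closed in $(0,\delta)$, pairwise disjoint, and cover $(0,\delta)$, so $(0,\delta)=A_i$ for exactly one $i$. With that sentence inserted, the rest is immediate: the series is nonzero because $f$ never vanishes, the leading exponent $\alpha=k_0/N$ is rational, and the leading coefficient $a$ is real and nonzero because it is the limit of the real numbers $f(t)/t^{\alpha}$ as $t\to 0^+$. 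Note also that allowing $k_0<0$ in your expansion is not a cosmetic point but is needed, since the lemma permits $f$ to be unbounded near $0$ (e.g.\ $f(t)=1/t$), so you were right not to restrict to nonnegative exponents.
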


\begin{lemma}[curve selection lemma at infinity]\label{CurveSelectionLemmaAtInfinity}
Let $S\subset \mathbb{R}^n$ be a semi-algebraic set, and let $f := (f_1, \ldots,f_m) \colon  \mathbb{R}^n \to \mathbb{R}^m$ be a semi-algebraic map. Assume that there exists a sequence $\{x_k\}_{{k} \ge 1} \subset S$ such that $\lim_{k \to +\infty} \| x_k\| = \infty$ and $\lim_{k \to +\infty} f(x_k)  = y \in(\overline{\mathbb{R}})^m,$ where $\overline{\mathbb{R}} := \mathbb{R} \cup \{\pm \infty\}.$ Then there exists an analytic semi-algebraic curve $\phi \colon (R, +\infty) \to \mathbb{R}^n$ such that $\phi(t) \in S$ for all $t > R, \lim_{t \to +\infty} \|\phi(t)\| = \infty$ and $\lim_{t \to +\infty} f(\phi(t)) = y.$
\end{lemma}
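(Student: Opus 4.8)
The plan is to reduce the statement to a setting where Hardt's triviality theorem can be applied over a one-dimensional base, and then to upgrade the resulting curve to an analytic one by the monotonicity lemma. Two features must be neutralized first: the limit $y$ may have infinite components, and the map $f$ need not be continuous. To handle the former, let $\theta\colon\overline{\mathbb R}\to[-1,1]$ be the semi-algebraic homeomorphism $\theta(s)=s/(1+|s|)$ for $s\in\mathbb R$ and $\theta(\pm\infty)=\pm1$, and put $g:=(\theta\circ f_1,\ldots,\theta\circ f_m)$ and $y':=(\theta(y_1),\ldots,\theta(y_m))\in[-1,1]^m$. Since $\theta$ is a homeomorphism, $\lim_k g(x_k)=y'$, and any curve $\phi$ with $\lim_t g(\phi(t))=y'$ will automatically satisfy $\lim_t f(\phi(t))=y$ in $(\overline{\mathbb R})^m$; so it suffices to treat the bounded, finite-valued data $(g,y')$. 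To handle the possible discontinuity of $g$, I pass to the graph $\Gamma:=\{(x,g(x)):x\in S\}\subset\mathbb R^n\times\mathbb R^m$, which is semi-algebraic by Theorem~\ref{TarskiSeidenbergTheorem}, and set $\Gamma':=\Gamma\cap\big((\mathbb R^n\setminus\{0\})\times\mathbb R^m\big)$.

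The key step is to fuse both asymptotic requirements—escape to infinity and convergence of $g$—into a single nonnegative continuous semi-algebraic function. On $\Gamma'$ define $G(x,z):=\|z-y'\|^2+\tfrac1{\|x\|^2}$. Then $G>0$ throughout $\Gamma'$, while $G(x_k,g(x_k))\to0$; moreover $G(x,z)\to0$ forces both $\|x\|\to\infty$ and $z\to y'$, hence $g(x)\to y'$ since $z=g(x)$ on $\Gamma$. Applying Theorem~\ref{HardtTheorem} to $G\colon\Gamma'\to\mathbb R$, the target $\mathbb R$ is partitioned into finitely many points and intervals; taking $\delta>0$ below the smallest positive endpoint, the interval $(0,\delta)$ lies in a single stratum $T_{j_0}$ over which $G$ is semi-algebraically trivial, say $h\colon F_{j_0}\times T_{j_0}\to G^{-1}(T_{j_0})\subset\Gamma'$. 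Because $G$ takes values in $(0,\delta)$ along the tail of the given sequence, $G^{-1}((0,\delta))\cap\Gamma'\neq\emptyset$ and so $F_{j_0}\neq\emptyset$; choosing $\xi\in F_{j_0}$ gives a continuous semi-algebraic curve $c(s):=h(\xi,s)\in\Gamma'$ with $G(c(s))=s\to0^+$.

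It remains to extract an analytic curve in $S$ and reparametrize. Writing $c(s)=(\tilde\phi(s),\tilde z(s))$, the identity $G\circ c=s$ gives $\|\tilde\phi(s)\|\to\infty$ and $g(\tilde\phi(s))=\tilde z(s)\to y'$ as $s\to0^+$, with $\tilde\phi(s)\in S$. Each coordinate of $c$ is a semi-algebraic function on $(0,\delta)$, so by Lemma~\ref{MonotonicityLemma} there is $\delta'\in(0,\delta)$ on which all coordinates, hence $\tilde\phi$, are analytic. Setting $R:=1/\delta'$ and $\phi(t):=\tilde\phi(1/t)$ for $t>R$ produces an analytic semi-algebraic curve with $\phi(t)\in S$, $\|\phi(t)\|\to\infty$, and $g(\phi(t))\to y'$, whence $f(\phi(t))=\theta^{-1}(g(\phi(t)))\to y$, as required.

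The main obstacle is conceptual rather than computational: recognizing that the two limiting conditions can be combined into the single function $G$, so that Hardt's theorem is invoked over a one-dimensional base where the stratification of $\mathbb R$ makes the relevant interval $(0,\delta)$ and its trivialization transparent. The auxiliary difficulties—the infinite components of $y$ and the discontinuity of $f$—are dispatched by the compactification $\theta$ and by working on the graph $\Gamma$, while the passage from Hardt's merely continuous trivialization to an analytic curve is exactly what the monotonicity lemma supplies.
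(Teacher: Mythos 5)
Your argument is correct, but there is nothing in the paper to compare it against: the paper states this lemma without proof, as a well-known fact taken from \cite{Bochnak1998} and \cite[Chapter~1]{HaHV2017}. The standard proof in that literature reduces ``at infinity'' to ``at a point'' via the inversion $x \mapsto x/\|x\|^2$ (together with a compactification of the range) and then invokes the classical local curve selection lemma, finishing with a Puiseux-type reparametrization. Your route is genuinely different, and it has the attractive feature of using only tools the paper itself quotes: the compactification $\theta$ and the passage to the graph $\Gamma$ neutralize the infinite components of $y$ and the possible discontinuity of $f$; the single function $G(x,z)=\|z-y'\|^2+\|x\|^{-2}$ fuses the two asymptotic requirements; Hardt's theorem (Theorem~\ref{HardtTheorem}) over the one-dimensional base $\mathbb{R}$ produces a continuous semi-algebraic section $s\mapsto h(\xi,s)$ over an interval $(0,\delta)$ lying in one stratum (your endpoint argument for that containment is the right one, and the nonemptiness of $F_{j_0}$ is correctly supplied by the tail of the given sequence, which lies in $G^{-1}((0,\delta))$); and Lemma~\ref{MonotonicityLemma} upgrades the coordinates to analytic functions on a smaller interval before the reparametrization $t=1/s$. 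The quantitative estimates also close: $G(c(s))=s$ gives $\|\tilde\phi(s)\|\ge s^{-1/2}$ and $\|g(\tilde\phi(s))-y'\|\le s^{1/2}$, so the reparametrized curve stays in $S$, escapes to infinity, and carries $f$ to $y$. Two cosmetic remarks: $\theta$ is a homeomorphism of $\overline{\mathbb{R}}$ onto $[-1,1]$ rather than a semi-algebraic map in the literal sense---what you actually use, and what holds, is that each $\theta\circ f_i$ is semi-algebraic and that convergence transfers in both directions; and Hardt partitions $\mathbb{R}$ into finitely many semi-algebraic sets, each a finite union of points and intervals, which is precisely what your choice of $\delta$ below the smallest positive endpoint exploits.
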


\begin{lemma}[path connectedness]\label{pathconnectedness}
The following statements hold.
\begin{enumerate}[{\rm (i)}]
\item Every semi-algebraic set has a finite number of connected components and each such component is semi-algebraic.

\item Every connected semi-algebraic set $S$ is semi-algebraically path connected: for every two points $x, y$ in $S,$ there exists a continuous semi-algebraic curve $\phi \colon [0, 1] \to \mathbb{R}^n$ lying in $S$ such that $\phi(0)  = x$ and $\phi(1) = y.$
\end{enumerate}
\end{lemma}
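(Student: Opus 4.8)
The plan is to treat the two assertions separately, using the cylindrical cell decomposition of semi-algebraic sets for (i) and Hardt's triviality (Theorem~\ref{HardtTheorem}) for (ii). For part (i) I would invoke the cell decomposition theorem (see~\cite{Bochnak1998}): any semi-algebraic set $S$ admits a finite partition $S = \bigsqcup_{i=1}^{k} C_i$ into semi-algebraic cells, each $C_i$ being semi-algebraically homeomorphic to an open cube $(0,1)^{d_i}$ (with the convention that $(0,1)^0$ is a point), and hence connected. Since every cell is connected, each cell lies entirely inside a single connected component of $S$; consequently each connected component is a union of cells. As there are only finitely many cells, there are only finitely many connected components, and each of them, being a finite union of the semi-algebraic cells $C_i$, is itself semi-algebraic.

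For part (ii), I would first record that ``being joined by a continuous semi-algebraic path in $S$'' is an equivalence relation on $S$: reflexivity and symmetry are clear, and transitivity follows by concatenating two such paths $\phi_1, \phi_2 \colon [0,1] \to S$ with $\phi_1(1) = \phi_2(0)$ into the continuous semi-algebraic path equal to $\phi_1(2t)$ on $[0, \tfrac12]$ and to $\phi_2(2t-1)$ on $[\tfrac12, 1]$. Call the resulting classes the semi-algebraic path components of $S$. The crux is to prove that each such component is open in $S$; granting this, the complement of any component is a union of the remaining (open) components, so each component is also closed in $S$, and the connectedness of $S$ then forces a single component, which is exactly the assertion that $S$ is semi-algebraically path connected.

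To prove openness, I would establish a local conic structure at each point $a \in S$. Set $\rho(x) := \|x - a\|^2$, a continuous semi-algebraic function, and apply Hardt's triviality to the restriction $\rho|_S \colon S \to \mathbb{R}$ (taking the distinguished family to be $\{S\}$). This yields a finite partition of $\mathbb{R}$ over whose pieces $\rho|_S$ is semi-algebraically trivial; in particular there exist $\varepsilon > 0$ and a semi-algebraic homeomorphism $h \colon F \times (0, \varepsilon) \to \rho^{-1}\big((0,\varepsilon)\big)$ with $\rho\big(h(w, t)\big) = t$. Given any $y \in S$ with $0 < \rho(y) =: s < \varepsilon$, write $y = h(w, s)$; then the curve $t \mapsto h(w, t)$ on $(0, s]$ satisfies $\|h(w,t) - a\|^2 = t \to 0$ as $t \to 0^+$, so it extends continuously by the value $a$ at $t = 0$ to a continuous semi-algebraic path in $S$ from $a$ to $y$. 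Since $\rho^{-1}(\{0\}) \cap S = \{a\}$, it follows that every point of $S \cap \{x \in \mathbb{R}^n : \|x - a\| < \sqrt{\varepsilon}\}$ is joined to $a$ by a continuous semi-algebraic path in $S$; hence the semi-algebraic path component of $a$ contains this relative neighborhood and is therefore open.

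I expect the main obstacle to be precisely this local step: Hardt's theorem by itself only provides a product trivialization over $(0, \varepsilon)$, and the substantive point is that the trivialization genuinely has a conic shape, namely that its fibers collapse to $a$ as $t \to 0^+$. The argument above extracts exactly this from the normalization $\rho\big(h(w,t)\big) = t$, which guarantees $h(w, t) \to a$; the remaining care lies in checking that adjoining the single point $(0,a)$ to the semi-algebraic graph of $t \mapsto h(w,t)$ yields a continuous semi-algebraic curve, while the path automatically remains in $S$ since $h$ takes values in $\rho^{-1}\big((0,\varepsilon)\big) \subseteq S$. Once this local semi-algebraic path connectedness is in hand, the open–closed dichotomy together with the connectedness of $S$ concludes the proof at once.
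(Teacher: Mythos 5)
Your proof is correct, but note that the paper does not prove this lemma at all: it is stated as one of several ``well-known lemmas'' and implicitly referred to the cited texts (\cite{Bochnak1998} and \cite[Chapter~1]{HaHV2017}), so the comparison is really with the standard textbook argument. For part (i) your cell-decomposition argument is exactly the classical proof in \cite{Bochnak1998}: finitely many cells, each semi-algebraically homeomorphic to a cube and hence connected, so each connected component is a finite union of cells and therefore semi-algebraic. For part (ii) you take a genuinely different route: the textbook proof works cell by cell, joining a point in the closure of a cell to points of that cell and propagating path connectedness across adjacent cells of the decomposition, whereas you derive \emph{local} semi-algebraic path connectedness from Hardt's triviality (Theorem~\ref{HardtTheorem}) applied to $\rho(x)=\|x-a\|^2$ and then finish with the open--closed dichotomy. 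Your key observation --- that the normalization $\rho(h(w,t))=t$ forces the fibers of the trivialization to collapse to $a$ as $t\to 0^+$, which is precisely the local conic structure --- is the right one, and the remaining details are routine: $(0,\varepsilon)$ lies in a single piece of Hardt's partition because semi-algebraic subsets of $\mathbb{R}$ are finite unions of points and intervals; the map $t\mapsto h(w,t)$ is semi-algebraic by Tarski--Seidenberg; the isolated-point case is vacuous; and concatenation preserves semi-algebraicity. The trade-off is that you invoke a heavier theorem than strictly necessary (Hardt's theorem is itself proved via cell decomposition or triangulation, so your proof is not more elementary than the classical one), but since Theorem~\ref{HardtTheorem} is already quoted in the paper and is logically independent of the present lemma, there is no circularity, and your argument has the advantage of being self-contained relative to the tools the paper actually states.
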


\begin{lemma}[piecewise continuity of semi-algebraic functions]\label{PiecewiseContinuity}
Given a semi-algebraic function $f \colon S \rightarrow \mathbb{R},$ where $S$ is a semi-algebraic subset of $\mathbb R^n$, there is a finite partition of $S$ into path connected semi-algebraic sets $C_1, \ldots, C_p,$ such that for each $i = 1, \ldots, p,$ the restriction of $f$ on $C_i$ is continuous.
\end{lemma}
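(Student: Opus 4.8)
The plan is to reduce continuity of $f$ to the geometry of its graph, so that Hardt's triviality theorem can be applied to a map that is continuous by construction. First I would introduce the graph
$$\Gamma := \{(x, f(x)) \in \mathbb{R}^n \times \mathbb{R} \ | \ x \in S\},$$
which is a semi-algebraic subset of $\mathbb{R}^{n+1}$ precisely because $f$ is a semi-algebraic function. The crucial point is that although $f$ itself need not be continuous -- so that Theorem~\ref{HardtTheorem} cannot be invoked for $f$ directly -- the restriction $\pi := p|_{\Gamma}$ of the canonical projection $p \colon \mathbb{R}^n \times \mathbb{R} \to \mathbb{R}^n, (x, t) \mapsto x,$ is a \emph{continuous} semi-algebraic map from $\Gamma$ onto $S,$ and it is moreover a bijection, with inverse $x \mapsto (x, f(x)).$

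Next I would apply Theorem~\ref{HardtTheorem} to $\pi \colon \Gamma \to S$ (the compatibility with a distinguished subfamily being unnecessary here). This produces a finite partition $S = \bigcup_{j = 1}^{q} T_j$ into semi-algebraic sets together with, for each $j$ with $\pi^{-1}(T_j) \neq \emptyset,$ a semi-algebraic trivialization $h_j \colon F_j \times T_j \to \pi^{-1}(T_j)$ satisfying $\pi \circ h_j(y, t) = t.$ Since $\pi$ is injective, every fibre is a single point, so each $F_j$ is a singleton and $h_j$ becomes a semi-algebraic homeomorphism from $T_j$ onto $\pi^{-1}(T_j).$ The defining relation $\pi \circ h_j = \mathrm{id}$ then forces $h_j(t) = (t, f(t))$ for $t \in T_j.$ As $h_j$ is in particular continuous and $f|_{T_j}$ is the composition of $h_j$ with the projection onto the last coordinate, the restriction $f|_{T_j}$ is continuous.

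Finally I would upgrade the semi-algebraic pieces $T_j$ to path-connected ones. By Lemma~\ref{pathconnectedness}(i) each $T_j$ has finitely many connected components, every such component is semi-algebraic, and by Lemma~\ref{pathconnectedness}(ii) each of them is semi-algebraically path connected. Relabelling the totality of these components as $C_1, \ldots, C_p$ yields a finite partition of $S$ into path-connected semi-algebraic sets; since continuity is inherited under restriction, $f|_{C_i}$ remains continuous for every $i,$ which is the desired conclusion.

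The only genuine subtlety -- and the step I expect to be the crux -- is the realization that Hardt's theorem must be applied to the projection on the graph rather than to $f$ itself, exactly because $f$ is not yet known to be continuous, together with the observation that injectivity collapses each fibre to a point and thereby turns the trivialization into a continuous selection that recovers $f.$ The remaining work is routine bookkeeping: verifying semi-algebraicity of $\Gamma$ and of $\pi,$ and passing to connected components.
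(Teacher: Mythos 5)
Your proof is correct, but there is nothing in the paper to compare it against line by line: the paper states this lemma among the ``well-known lemmas'' of semi-algebraic geometry and gives no proof at all, deferring implicitly to the cited references \cite{Bochnak1998} and \cite[Chapter~1]{HaHV2017}. In those references the result is usually obtained from cell (cylindrical algebraic) decomposition: one partitions $S$ into finitely many semi-algebraic cells on which $f$ is continuous (in fact analytic), and then passes to connected components. Your route is genuinely different and works entirely from the statements the paper does quote: the graph $\Gamma$ is semi-algebraic by the paper's very definition of a semi-algebraic map; the projection $\pi = p|_{\Gamma} \colon \Gamma \to S$ is a \emph{continuous} semi-algebraic bijection; Hardt's theorem (Theorem~\ref{HardtTheorem}) applied to $\pi$ yields a finite semi-algebraic partition $S = \bigcup_{j} T_j$ over which $\pi$ is trivial; injectivity of $\pi$ forces each fibre $F_j$ to be a singleton, and the relation $\pi\circ h_j(y,t) = t$ then pins down $h_j(t) = (t, f(t)),$ so $t \mapsto (t, f(t))$ is continuous on $T_j$ and hence so is $f|_{T_j}$; finally Lemma~\ref{pathconnectedness} refines the pieces into path connected semi-algebraic sets. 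Each of these steps checks out, including the fibre-collapsing argument ($h_j(F_j\times\{t\}) = \pi^{-1}(t)$ is a singleton and $h_j$ is injective, so $F_j$ is a point). The one remark worth making concerns logical economy rather than correctness: the known proofs of Hardt's theorem themselves proceed through a decomposition that already yields piecewise continuity of semi-algebraic maps, so your derivation invokes strictly heavier machinery than the classical argument; but since the paper quotes Hardt's theorem as a black box, your proof is a legitimate and rather elegant derivation within the toolkit the paper makes available---the key insight being exactly the one you flag, that Hardt must be applied to the graph projection because $f$ itself is not yet known to be continuous.
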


As a consequence of the curve selection lemma at infinity, we have the following fact.
\begin{corollary}[{see \cite[Corollary~2.11]{PHAMTS2023}}] \label{Corollary210}
Let $S\subset \mathbb{R}^n$ be a semi-algebraic set. Then $S$ is unbounded if and only if there exists a real number $R > 0$ such that the set $S \cap \mathbb{S}_t$ is nonempty for all $t > R.$
\end{corollary}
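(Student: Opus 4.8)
The plan is to prove the two implications separately, with the reverse implication being immediate and the forward implication carrying all the semi-algebraic content. For the easy direction ($\Leftarrow$), I would suppose there is $R > 0$ with $S \cap \mathbb{S}_t \neq \emptyset$ for every $t > R$; then for each such $t$ one may pick a point $x_t \in S$ with $\|x_t\| = t$, so $S$ contains points of arbitrarily large norm and is therefore unbounded. This step uses nothing about semi-algebraicity.

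For the forward direction ($\Rightarrow$), the strategy is to produce a single semi-algebraic curve running to infinity inside $S$ along which the distance to the origin sweeps out an entire terminal ray of radii. Since $S$ is unbounded, I can choose a sequence $\{x_k\} \subset S$ with $\|x_k\| \to \infty$; passing to a subsequence, I may assume $x_k \to y$ in $(\overline{\mathbb{R}})^n$ by compactness. Applying the curve selection lemma at infinity (Lemma~\ref{CurveSelectionLemmaAtInfinity}) to the identity map then yields an analytic semi-algebraic curve $\phi \colon (R_0, +\infty) \to \mathbb{R}^n$ with $\phi(s) \in S$ for all $s > R_0$ and $\lim_{s \to +\infty}\|\phi(s)\| = +\infty$.

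Next I would analyze the semi-algebraic function $g(s) := \|\phi(s)\|$ on $(R_0, +\infty)$. By the monotonicity lemma (Lemma~\ref{MonotonicityLemma}), $g$ is monotone and continuous on some terminal interval $(R_1, +\infty)$ with $R_1 > R_0$; because $g(s) \to +\infty$, this monotone branch must be strictly increasing. Hence the image $g\big((R_1, +\infty)\big)$ is an interval of the form $(c, +\infty)$, where $c := \lim_{s \to R_1^+} g(s)$. Setting $R := c$, for every $t > R$ there is some $s > R_1$ with $\|\phi(s)\| = g(s) = t$, and then $\phi(s) \in S \cap \mathbb{S}_t$, proving $S \cap \mathbb{S}_t \neq \emptyset$.

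The only genuinely delicate point is the passage from ``$S$ is unbounded'' to ``$S$ meets every large sphere,'' which is precisely where the forward direction fails for arbitrary (non-semi-algebraic) sets: an unbounded set could escape to infinity while skipping a discrete set of radii. The curve selection lemma at infinity rules this out by furnishing a connected curve lying in $S$, and the monotonicity lemma guarantees that the radius along this curve is eventually a strictly increasing function of the parameter, hence surjective onto a terminal ray. I would only need to verify the harmless technicality that the monotonicity lemma applies on the unbounded interval $(R_0, +\infty)$ (equivalently, after reparametrizing to a bounded interval), so that the final monotone piece near $+\infty$ is well defined.
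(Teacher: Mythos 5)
Your proof is correct and takes essentially the same route as the paper, which presents this corollary precisely as a consequence of the curve selection lemma at infinity (Lemma~\ref{CurveSelectionLemmaAtInfinity}), deferring details to \cite[Corollary~2.11]{PHAMTS2023}: one extracts an analytic semi-algebraic curve in $S$ tending to infinity and uses the monotonicity lemma (Lemma~\ref{MonotonicityLemma}) to make the radius function along it eventually continuous and strictly increasing, hence surjective onto a terminal ray $(R, +\infty)$. The only cosmetic point is that your choice $R := c$ could be $0$; replace it by $\max\{c, 1\}$ so that $R > 0$ as the statement requires.
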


We close this section with the following fact (see \cite[Lemma~2.4]{PHAMTS2020}).
\begin{lemma} \label{ChainRule}
Consider a lower semi-continuous and semi-algebraic function $f \colon \mathbb{R}^n \rightarrow \mathbb{R} \cup \{+\infty\}$ and a semi-algebraic curve $\phi \colon [a, b] \rightarrow {\rm dom}f.$
Then for all but finitely many ${{t}} \in [a, b],$ the maps $\phi$ and $f \circ \phi$ are analytic at ${{t}}$ and satisfy
\begin{eqnarray*}
v \in \partial f (\phi({{t}})) &\Longrightarrow& \langle v, \dot{\phi}({{t}}) \rangle \ = \ \frac{d}{dt}(f\circ\phi)(t), \\
v \in \partial^\infty f (\phi({{t}})) &\Longrightarrow& \langle v, \dot{\phi}({{t}}) \rangle \ = \ 0,
\end{eqnarray*}
where $\dot{\phi}({{t}}) := \frac{d}{dt}\phi(t).$
\end{lemma}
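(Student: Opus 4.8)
The plan is to reduce both implications to a single orthogonality statement on the epigraph of $f$ and then settle that statement using a Whitney stratification. First I would observe that, since $\phi$ and $f$ are semi-algebraic and $\phi([a,b]) \subset \mathrm{dom} f$, the composition $f \circ \phi \colon [a,b] \to \mathbb{R}$ is a one-variable semi-algebraic function; hence by the monotonicity lemma (Lemma~\ref{MonotonicityLemma}) both $\phi$ and $f \circ \phi$ are analytic off a finite subset of $[a,b]$, so after discarding those points the derivatives $\dot\phi(t)$ and $\frac{d}{dt}(f\circ\phi)(t)$ exist. Set $\psi(t) := (\phi(t), f(\phi(t)))$; this is an analytic semi-algebraic curve lying in the semi-algebraic set $M := \mathrm{epi} f$, with $\dot\psi(t) = (\dot\phi(t), \frac{d}{dt}(f\circ\phi)(t))$. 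Recalling that $v \in \partial f(\phi(t))$ means $(v,-1) \in N(\psi(t); M)$ and $v \in \partial^\infty f(\phi(t))$ means $(v,0) \in N(\psi(t); M)$, both identities follow at once from the single claim that
\[
\langle w, \dot\psi(t) \rangle \ = \ 0 \qquad \text{for every } w \in N(\psi(t); M),
\]
valid for all but finitely many $t$: taking $w = (v,-1)$ gives $\langle v, \dot\phi(t)\rangle = \frac{d}{dt}(f\circ\phi)(t)$, and $w = (v,0)$ gives $\langle v, \dot\phi(t)\rangle = 0$.

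To prove this claim I would invoke the standard fact (see~\cite{Bochnak1998, HaHV2017}) that the closed semi-algebraic set $M$ admits a Whitney (a)-regular semi-algebraic $C^1$ stratification $M = \bigsqcup_\alpha M_\alpha$. The preimages $\psi^{-1}(M_\alpha)$ form a finite semi-algebraic partition of $[a,b]$; being one-dimensional, each is a finite union of points and open intervals, so after removing finitely many more points every remaining $t$ lies in an interval $I$ with $\psi(I) \subset M_\alpha$ for a single stratum $M_\alpha$. Since $\psi|_I$ is then an analytic curve inside the $C^1$ manifold $M_\alpha$, its velocity is tangent: $\dot\psi(t) \in T_{\psi(t)} M_\alpha$. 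It therefore suffices to establish the purely geometric fact that $N(x; M) \perp T_x M_\alpha$ whenever $x \in M_\alpha$.

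The heart of the matter, and the step I expect to be the main obstacle, is precisely this last fact, since it is where the passage from regular to limiting normals must be controlled. I would argue as follows. Let $w \in N(x; M)$, so $w = \lim_k u_k$ with $u_k \in \widehat N(x_k; M)$ and $x_k \to x$, $x_k \in M$; passing to a subsequence I may assume all $x_k$ lie in one stratum $M_\beta$. Two inclusions then do the work. First, since $M_\beta \subset M$, the defining inequality of the regular normal cone is inherited, giving $\widehat N(x_k; M) \subset \widehat N(x_k; M_\beta) = (T_{x_k} M_\beta)^\perp$, the last equality holding because $M_\beta$ is a $C^1$ manifold (as recalled after the definition of the normal cones); hence $u_k \perp T_{x_k} M_\beta$. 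Second, Whitney's condition (a) ensures that the tangent spaces $T_{x_k} M_\beta$ accumulate only on subspaces containing $T_x M_\alpha$, so passing to the limit in $u_k \perp T_{x_k} M_\beta$ yields $w \perp T_x M_\alpha$, as required. Combined with $\dot\psi(t) \in T_{\psi(t)} M_\alpha$ this gives the displayed claim, and hence the lemma. The semi-algebraicity that keeps every exceptional set finite enters through $\mathrm{epi} f$ being semi-algebraic (via the Tarski--Seidenberg theorem and the quantifier remark) and through $\psi$ being a semi-algebraic curve, so that each preimage $\psi^{-1}(M_\alpha)$ is a one-dimensional semi-algebraic set.
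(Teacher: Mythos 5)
Your proof is correct, but note that the paper itself gives no argument for this lemma: it is quoted from \cite[Lemma~2.4]{PHAMTS2020}, so there is no internal proof to compare against, and your write-up should be judged as a self-contained substitute for that citation. Your route---reduce both implications to the single claim that $\dot\psi(t)=\bigl(\dot\phi(t),\tfrac{d}{dt}(f\circ\phi)(t)\bigr)$ is orthogonal to the whole limiting normal cone $N(\psi(t);\mathrm{epi}f)$, then prove that claim by trapping the curve inside one stratum of a Whitney (a)-regular semi-algebraic $C^1$ stratification of the epigraph (closed because $f$ is lower semi-continuous, semi-algebraic by Tarski--Seidenberg)---is essentially the standard stratification argument behind the cited result and behind the projection formulas of Bolte, Daniilidis, Lewis and Shiota for stratifiable functions. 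The individual steps all check out: Lemma~\ref{MonotonicityLemma} gives analyticity of $\phi$ and $f\circ\phi$ off a finite set; semi-algebraic subsets of $[a,b]$ are finite unions of points and open intervals, so after deleting finitely many more points the curve runs inside a single stratum and its velocity is tangent to it; the antitonicity $\widehat N(x_k;M)\subset\widehat N(x_k;M_\beta)=(T_{x_k}M_\beta)^\perp$, compactness of the Grassmannian, and condition (a) pass orthogonality to the limiting normals. Two points deserve to be made explicit in a polished version: when the approximating sequence lies in the same stratum as the base point, condition (a) is not invoked---$C^1$ continuity of the tangent spaces does the job; and the exceptional finite set you discard depends only on $\phi$ and the finite stratification, not on $v$, which is exactly what makes the conclusion uniform over all $v\in\partial f(\phi(t))$ and $v\in\partial^\infty f(\phi(t))$ as the statement requires. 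What your approach buys, compared with the paper's bare citation, is self-containedness and in fact a slightly stronger conclusion: at every non-exceptional $t$ the orthogonality holds for the entire cone $N\bigl((\phi(t),f(\phi(t)));\mathrm{epi}f\bigr)$, not only for its $(v,-1)$ and $(v,0)$ slices.
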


\section{Tangencies} \label{Section3}

In order to formulate and prove the main results of the paper, we need some notation and auxiliary results.
Throughout the paper, let $f\colon \R^n \to \mathbb{R} \cup \{+\infty\}$ be a lower semi-continuous and semi-algebraic function and let $S$ be a closed semi-algebraic subset of $\R^n$ such that the set $\mathrm{dom} f \cap  S$ is nonempty and unbounded.

\begin{definition}{\rm
By the {\em set of critical points of $f$ on $S$} we mean the set
$$\Sigma\left(f,S\right) := \left\{x \in \mathrm{dom} f \cap  S\ | \ 0\in\partial f(x)+ N(x;S)\right\}.$$
}\end{definition}

By the Tarski--Seidenberg theorem (Theorem~\ref{TarskiSeidenbergTheorem}), $\Sigma(f, S)$ is a semi-algebraic set. Moreover, we have

\begin{lemma}\label{BD32}
$f\left(\Sigma\left(f,S\right)\right)$ is a finite subset of $\mathbb{R}.$
\end{lemma}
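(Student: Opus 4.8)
The plan is to show that $f$ is constant on each member of a suitable finite semi-algebraic partition of $\Sigma(f,S)$, so that the image $f(\Sigma(f,S))$ is a finite set; it consists of real numbers because $\Sigma(f,S)\subset\mathrm{dom}f$, on which $f<+\infty$. Since $\Sigma(f,S)$ is semi-algebraic by the Tarski--Seidenberg theorem (Theorem~\ref{TarskiSeidenbergTheorem}), I would first invoke the piecewise continuity of semi-algebraic functions (Lemma~\ref{PiecewiseContinuity}) to partition $\Sigma(f,S)$ into finitely many path-connected semi-algebraic sets $C_1,\dots,C_p$ on each of which $f$ is continuous. It then suffices to prove that $f$ is constant on every $C_i$.

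Fix $i$ and two points $a,b\in C_i$. Using semi-algebraic path connectedness (Lemma~\ref{pathconnectedness}), choose a continuous semi-algebraic curve $\phi\colon[0,1]\to C_i$ with $\phi(0)=a$ and $\phi(1)=b$, and set $g:=f\circ\phi$. Since $\phi$ takes values in $C_i$ and $f|_{C_i}$ is continuous, $g$ is a continuous semi-algebraic function on $[0,1]$. The crux is to show that $g'(t)=0$ for all but finitely many $t$. Because $\phi(t)\in\Sigma(f,S)$, for each $t$ there exist $v_t\in\partial f(\phi(t))$ and $w_t\in N(\phi(t);S)$ with $v_t+w_t=0$. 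Here I would apply Lemma~\ref{ChainRule} twice along $\phi$: once to $f$ (whose domain contains the image of $\phi$), and once to the indicator function $\delta_S$ (whose domain $S$ contains the image of $\phi$, and for which $\partial\delta_S(\phi(t))=N(\phi(t);S)$ by Lemma~\ref{Lemma23}). Outside a finite exceptional set of $t$, both chain rules hold simultaneously, and since $\delta_S\circ\phi\equiv 0$ we obtain
$$g'(t)=\langle v_t,\dot\phi(t)\rangle=\langle v_t,\dot\phi(t)\rangle+\langle w_t,\dot\phi(t)\rangle=\langle v_t+w_t,\dot\phi(t)\rangle=0,$$
using the chain rule for $f$ in the first equality and the chain rule for $\delta_S$ (which forces $\langle w_t,\dot\phi(t)\rangle=0$) to insert the middle term.

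A continuous function on $[0,1]$ whose derivative vanishes off a finite set is constant (this can also be read off from the monotonicity lemma, Lemma~\ref{MonotonicityLemma}, applied to the semi-algebraic function $g$), so $f(a)=g(0)=g(1)=f(b)$. As $a,b\in C_i$ were arbitrary, $f$ is constant on each $C_i$, say with value $c_i$, whence $f(\Sigma(f,S))=\{c_i \ | \ C_i\neq\emptyset\}$ is a finite subset of $\mathbb{R}$. The main obstacle I anticipate is the middle step: a priori a vector in the limiting normal cone $N(\phi(t);S)$ need not be orthogonal to the tangent direction $\dot\phi(t)$, so criticality alone does not yield $g'(t)=0$. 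The device that resolves this is to run the chain rule on the indicator function $\delta_S$, which, through $\partial\delta_S=N(\cdot;S)$, supplies exactly the orthogonality $\langle w_t,\dot\phi(t)\rangle=0$; a secondary point requiring care is the continuity of $g=f\circ\phi$, which is precisely why the piecewise-continuity partition is arranged first.
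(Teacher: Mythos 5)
Your proposal is correct and follows essentially the same route as the paper's proof: partition $\Sigma(f,S)$ via piecewise continuity (Lemma~\ref{PiecewiseContinuity}) into path-connected pieces, join points by semi-algebraic curves (Lemma~\ref{pathconnectedness}), and apply the chain rule (Lemma~\ref{ChainRule}) to both $f$ and the indicator $\delta_S$ to get $\frac{d}{dt}(f\circ\phi)(t)=0$ off a finite set, hence constancy on each piece. Your key device---using $\partial\delta_S=N(\cdot;S)$ to extract the orthogonality $\langle w_t,\dot\phi(t)\rangle=0$---is exactly the paper's argument, written there as $\frac{d}{dt}(f\circ\phi)(t)=\langle\omega(t),\dot\phi(t)\rangle=-\frac{d}{dt}(\delta_S\circ\phi)(t)=0$.
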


\begin{proof}
In view of Lemmas~\ref{pathconnectedness} and \ref{PiecewiseContinuity}, there exists a finite partition of $\Sigma(f, S)$ into path connected semi-algebraic sets $C_1, \ldots, C_p$ such that for each $i = 1, \ldots, p,$ the restriction of $f$ on ${C_i}$ is continuous. We will show that the function $f$ is constant on $C_i.$ To this end, let $\phi \colon [0, 1] \to \mathbb{R}^n$ be a continuous semi-algebraic curve lying in $C_i.$ Then
the function $f \circ \phi$ is continuous; furthermore, by definition, for each $t \in[0, 1]$ there exists $\omega(t) \in\partial f(\phi(t)) \cap \big(-N(\phi(t); S) \big).$
Note that $(\delta_S \circ \phi)(t) = 0$ and $\partial \delta_S (\phi(t)) = N(\phi(t);S)$ (see Lemma~\ref{Lemma23}). By Lemma~\ref{ChainRule}, for all but finitely many $t\in [0, 1],$ the maps $\phi$ and $f \circ \phi$ are analytic at $t$ and satisfy
\begin{equation*}
\frac{d}{dt} (f \circ \phi)(t) = \langle \omega(t), \dot{\phi}(t)\rangle =
- \langle -\omega(t), \dot{\phi}(t)\rangle = - \frac{d}{dt} (\delta_S \circ \phi)(t)=0.
\end{equation*}
It follows that the function $f \circ \phi$ is constant. Finally, since the semi-algebraic set $C_i$ is path connected, any two points in $C_i$ can be joined (in $C_i$) by a continuous semi-algebraic curve (by Lemma~\ref{pathconnectedness}). Therefore, $f$ is constant on $C_i,$ which completes the proof.
\end{proof}

\begin{lemma}\label{BD33}
There exists a real number $R > 0$ such that for all $t > R$ and all $x \in S \cap \mathbb{S}_t$ we have
\begin{equation*}
N(x; {S}) \cap \big(-N(x; \mathbb{S}_t) \big) = \{0\}.
\end{equation*}
\end{lemma}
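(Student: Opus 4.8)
The plan is to reduce the claim to the boundedness of a single semi-algebraic ``tangency'' set and then obstruct any escape to infinity by a curve-selection argument. First I would record that, since $\mathbb{S}_t$ is a $C^1$ manifold, the normal cone $N(x;\mathbb{S}_t)$ is the radial line $\{\mu x : \mu \in \mathbb{R}\}$ (already noted after Lemma~\ref{Lemma23}), and this line is symmetric, so $-N(x;\mathbb{S}_t) = N(x;\mathbb{S}_t) = \mathbb{R}x$. Hence the asserted intersection fails to be $\{0\}$ at $x$ precisely when there is a scalar $\mu \neq 0$ with $\mu x \in N(x;S)$. Collecting these bad points gives
\[
\Gamma := \{x \in S \ | \ \exists\, \mu \neq 0,\ \mu x \in N(x;S)\},
\]
which is semi-algebraic by the Tarski--Seidenberg theorem. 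The lemma is then equivalent to the statement that $\Gamma$ is bounded: if $\Gamma \subset \mathbb{B}_R$, then for every $t > R$ and every $x \in S \cap \mathbb{S}_t$ we have $x \notin \Gamma$, that is, $N(x;S) \cap \big(-N(x;\mathbb{S}_t)\big) = \{0\}$.

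Next I would argue by contradiction, assuming $\Gamma$ is unbounded. Then there is a sequence $\{x_k\} \subset \Gamma$ with $\|x_k\| \to \infty$, and the curve selection lemma at infinity (Lemma~\ref{CurveSelectionLemmaAtInfinity}), applied to $\Gamma$, produces an analytic semi-algebraic curve $\phi \colon (R_0, +\infty) \to \mathbb{R}^n$ with $\phi(t) \in \Gamma$ for all $t$ and $\|\phi(t)\| \to \infty$ as $t \to +\infty$. By the definition of $\Gamma$, for each $t$ there is $\mu(t) \neq 0$ with $\mu(t)\phi(t) \in N(\phi(t);S)$.

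The crux is to show that $\|\phi(t)\|$ is in fact constant along this curve, which is absurd. To do so I would apply the chain-rule lemma (Lemma~\ref{ChainRule}) to the indicator function $\delta_S$ and the curve $\phi$ (restricted to any compact subinterval). Since $\delta_S \circ \phi \equiv 0$ and $\partial\delta_S(\phi(t)) = N(\phi(t);S)$ by Lemma~\ref{Lemma23}, the lemma yields, for all but finitely many $t$ and all $v \in N(\phi(t);S)$,
\[
\langle v, \dot\phi(t)\rangle = \tfrac{d}{dt}(\delta_S\circ\phi)(t) = 0.
\]
Taking $v = \mu(t)\phi(t)$ and dividing by $\mu(t) \neq 0$ gives $\langle \phi(t), \dot\phi(t)\rangle = 0$, hence $\tfrac{d}{dt}\|\phi(t)\|^2 = 0$ for all but finitely many $t$. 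As $t \mapsto \|\phi(t)\|^2$ is a continuous semi-algebraic function whose derivative vanishes off a finite set, the monotonicity lemma (Lemma~\ref{MonotonicityLemma}) forces it to be constant on $(R_0,+\infty)$; this contradicts $\|\phi(t)\| \to \infty$. Therefore $\Gamma$ is bounded and the lemma follows.

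I expect the only delicate point to be the passage through the chain-rule lemma: one must use that its conclusion holds for \emph{every} $v$ in the limiting normal cone at the good parameters $t$, so that no semi-algebraic selection of the scalars $\mu(t)$ is needed. Everything else---identifying $N(x;\mathbb{S}_t)$ as the radial line, reformulating the claim as boundedness of $\Gamma$, and converting a vanishing derivative into constancy---is routine once this mechanism is in place.
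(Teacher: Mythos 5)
Your proof is correct, and it rests on the same machinery as the paper's: the identification $N(x;\mathbb{S}_t)=\mathbb{R}x$, the curve selection lemma at infinity, and the chain rule for the indicator function $\delta_S$. The execution differs in two small but genuine ways. The paper applies curve selection to the set $\{(x,\mu)\in\mathbb{R}^n\times\mathbb{R} \ | \ x\in S,\ \mu\neq 0,\ -\mu x\in N(x;S)\}$ in the product space, thereby obtaining a semi-algebraic multiplier $t\mapsto\mu(t)$ along the curve; it then invokes the monotonicity lemma to arrange that $\|\phi(t)\|^2$ is strictly increasing, so that the chain-rule identity $0=-\tfrac{\mu(t)}{2}\tfrac{d}{dt}\|\phi(t)\|^2$ forces $\mu(t)=0$, contradicting $\mu(t)\neq 0$. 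You instead project the multiplier away, run curve selection only in $\mathbb{R}^n$ on the bad set $\Gamma$, and---as you correctly flag---exploit the fact that the conclusion of Lemma~\ref{ChainRule} holds for \emph{every} $v\in\partial\delta_S(\phi(t))$ at the good parameters, so no selection of $\mu(t)$ is needed; your contradiction then comes from constancy of $\|\phi(t)\|^2$ against $\|\phi(t)\|\to\infty$. The two contradictions are mirror images of each other ($\mu(t)=0$ versus $\tfrac{d}{dt}\|\phi(t)\|^2=0$), and both are legitimate; your version is marginally leaner since it avoids the product-space lift and the strict-monotonicity normalization, while the paper's version keeps the multiplier explicit as a semi-algebraic function along the curve, which is the pattern it reuses in the proofs of Lemmas~\ref{BD36} and~\ref{BD39}.
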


\begin{proof}
Suppose to the contrary that the lemma does not hold: there exist sequences $x_k \in S,$ with $\lim_{k \to +\infty} \|x_k\|  = +\infty,$ and $\mu_k \in\mathbb{R}\setminus\{0\}$ such that
$-\mu_k x_k \in N(x_k ;S).$
Applying the Curve Selection Lemma at infinity (Lemma~\ref{CurveSelectionLemmaAtInfinity}) for the semi-algebraic set
$$\{(x, \mu) \in \mathbb{R}^n \times \mathbb{R} \ | \ x \in S, \mu \ne 0, -\mu x \in N(x; S)\}$$
and the semi-algebraic map
$$\mathbb{R}^n \times \mathbb{R} \to \mathbb{R}, \quad (x, \mu)  \mapsto \|x\|,$$
we get an analytic semi-algebraic curve $(\phi, \mu)  \colon (R,+\infty) \to \R^n \times \R$ with $\lim_{t \to +\infty} \|\phi(t) \| = +\infty$ such that for all $t > R,$ we have
$$\phi(t)\in S, \quad \mu(t) \ne 0,  \quad \textrm{ and } \quad -\mu(t)\phi(t)\in N(\phi(t);S).$$
In view of Lemma~\ref{MonotonicityLemma},  we may assume that the function
$$(R, +\infty) \to \mathbb{R}, \quad t \mapsto \|\phi(t)\|^2,$$
is strictly increasing (perhaps after increasing $R$); in particular, $\frac{d}{d t}\|\phi(t)\|^2 > 0$ for all $t > R.$

On the other hand, since $(\delta_S \circ \phi)(t) = 0$ and  $\partial \delta_S(\phi(t)) = N(\phi(t); S),$ it follows from Lemma~\ref{ChainRule} that for all but finitely many $t \in (R, +\infty),$
\begin{equation*}
0 = \frac{d}{d t}(\delta_S \circ \phi)(t) = \langle - \mu(t)\phi(t), \dot{\phi}(t) \rangle  =  - \frac{\mu(t)}{2} \frac{d}{dt}\|\phi(t)\|^2.
\end{equation*}
Therefore, $\mu(t) = 0,$ a contradiction.
\end{proof}

\begin{lemma}\label{BD34}
There exists a real number $R > 0$ such that for all $t > R$ and all $x \in S \cap \mathbb{S}_t$ we have the inclusion
\begin{equation*}
N(x; S \cap \mathbb{S}_t) \subset N(x; S) + N(x; \mathbb{S}_t).
\end{equation*}
\end{lemma}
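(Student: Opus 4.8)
The plan is to recognize the desired inclusion as a direct instance of the \emph{intersection rule} for limiting normal cones, whose single hypothesis is precisely the qualification condition already secured in Lemma~\ref{BD33}. Recall that for two closed sets $\Omega_1, \Omega_2 \subset \mathbb{R}^n$ and a point $x \in \Omega_1 \cap \Omega_2,$ if the qualification condition
\begin{equation*}
N(x; \Omega_1) \cap \big(-N(x; \Omega_2)\big) = \{0\}
\end{equation*}
holds, then one has the inclusion $N(x; \Omega_1 \cap \Omega_2) \subset N(x; \Omega_1) + N(x; \Omega_2)$ (see \cite{Mordukhovich2018}). Here I would take $\Omega_1 = S$ and $\Omega_2 = \mathbb{S}_t,$ both of which are closed: $S$ is closed by standing assumption, and each sphere $\mathbb{S}_t$ is closed (indeed a $C^1$ manifold for $t > 0,$ so that $N(x; \mathbb{S}_t) = \{\mu x \ | \ \mu \in \mathbb{R}\}$ as recorded in Section~\ref{Section2}).

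First I would fix the radius $R > 0$ furnished by Lemma~\ref{BD33}, which guarantees that for all $t > R$ and all $x \in S \cap \mathbb{S}_t,$
\begin{equation*}
N(x; S) \cap \big(-N(x; \mathbb{S}_t)\big) = \{0\}.
\end{equation*}
This is exactly the qualification condition required by the intersection rule, with $\Omega_1 = S$ and $\Omega_2 = \mathbb{S}_t.$ Applying the rule pointwise at each such $x$ then yields
\begin{equation*}
N(x; S \cap \mathbb{S}_t) \subset N(x; S) + N(x; \mathbb{S}_t),
\end{equation*}
which is the claimed inclusion. No uniformity in $t$ or $x$ beyond that already delivered by Lemma~\ref{BD33} is needed, since the intersection rule is a pointwise statement and Lemma~\ref{BD33} verifies its hypothesis simultaneously at every point $x \in S \cap \mathbb{S}_t$ for all $t > R.$

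Since Lemma~\ref{BD33} has done the substantive work, the only genuine care here is bookkeeping: matching the qualification condition of the general intersection rule to the conclusion of Lemma~\ref{BD33} and checking that both sets are closed so that the rule applies. The main obstacle, such as it is, lies in correctly invoking the appropriate form of the limiting-normal-cone intersection rule and confirming that the sphere $\mathbb{S}_t,$ being a smooth closed manifold, is an admissible second set; once these are in place the proof is immediate.
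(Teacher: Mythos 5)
Your proof is correct and follows exactly the paper's own route: it invokes the qualification condition established in Lemma~\ref{BD33} as the hypothesis of the limiting-normal-cone intersection rule (this is \cite[Theorem~2.16]{Mordukhovich2018}) applied to $\Omega_1 = S$ and $\Omega_2 = \mathbb{S}_t$. The paper's proof is a one-line citation of precisely these two ingredients, so your argument is the same, just spelled out in more detail.
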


\begin{proof}
This follows directly from Lemma~\ref{BD33} and \cite[Theorem~2.16]{Mordukhovich2018}.
\end{proof}

\begin{definition}{\rm
We say that the {\em qualification condition} ($\mathrm{(QC)}$ for short) holds if
\begin{equation*}
\partial^\infty f(x)\cap \big( -N(x;S) \big) =\{0\} \quad \textrm{ for all } \quad x \in \mathrm{dom} f \cap  S.
\end{equation*}
We say that the {\em qualification condition at infinity} ($\mathrm{(QC)}_{\infty}$ for short) holds, if there exists $R > 0$ such that
\begin{equation*}
\partial^\infty f(x)\cap \big( -N(x;S) \big) =\{0\} \quad \textrm{ for all } \quad x \in (\mathrm{dom} f \cap  S) \setminus  \mathbb{B}_{R}.
\end{equation*}
}\end{definition}

Note that if $f$ is locally Lipschitz, then $\partial^\infty f(x) = \{0\}$ for all $x,$ and so the conditions $\mathrm{(QC)}$ and $\mathrm{(QC)}_{\infty}$ hold.

\begin{lemma}\label{BD36}
If $\mathrm{(QC)}_{\infty}$ holds, then there exists $R>0$ such that for all $t > R$ and all $x\in \mathrm{dom} f \cap  S \cap \mathbb{S}_t,$
\begin{equation*}
\partial^\infty f(x) \cap \big(-N(x;S\cap \mathbb{S}_t)\big) =\{0\}.
\end{equation*}
\end{lemma}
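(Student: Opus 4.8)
The plan is to combine the given hypothesis $\mathrm{(QC)}_{\infty}$ with the two preceding structural lemmas, Lemma~\ref{BD33} and Lemma~\ref{BD34}, to control the normal cone of the intersection $S \cap \mathbb{S}_t$. First I would choose $R > 0$ large enough that three conditions hold simultaneously: the conclusion of $\mathrm{(QC)}_{\infty}$ is valid on $(\mathrm{dom} f \cap S) \setminus \mathbb{B}_R$, the conclusion of Lemma~\ref{BD33} holds (so $N(x;S) \cap (-N(x;\mathbb{S}_t)) = \{0\}$ for $x \in S \cap \mathbb{S}_t$, $t > R$), and the inclusion of Lemma~\ref{BD34} holds (so $N(x; S \cap \mathbb{S}_t) \subset N(x;S) + N(x;\mathbb{S}_t)$). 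Since each lemma produces its own threshold, I take $R$ to be the maximum of the three.

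Next, fix $t > R$ and $x \in \mathrm{dom} f \cap S \cap \mathbb{S}_t$, and take an arbitrary $v \in \partial^\infty f(x) \cap \big(-N(x; S \cap \mathbb{S}_t)\big)$; the goal is to show $v = 0$. From $-v \in N(x; S \cap \mathbb{S}_t)$ and Lemma~\ref{BD34}, I would write $-v = a + b$ with $a \in N(x;S)$ and $b \in N(x;\mathbb{S}_t)$. Recalling from Section~\ref{Section2} that $N(x;\mathbb{S}_t) = \{\mu x \mid \mu \in \mathbb{R}\}$, we have $b = \mu x$ for some $\mu \in \mathbb{R}$, so $v = -a - \mu x$.

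The crux of the argument is to peel off the $\mu x$ component so as to land inside the set where $\mathrm{(QC)}_{\infty}$ applies. Here I expect the main obstacle to be the bookkeeping of signs and the verification that the horizon subdifferential side does not acquire an unwanted multiple of $x$. The clean way to do this is to observe that $v \in \partial^\infty f(x)$ and that, by $\mathrm{(QC)}_{\infty}$, we would like to pair $v$ against something in $-N(x;S)$. Since $\partial^\infty f(x)$ is a cone and $x \in \mathbb{S}_t$ with $t > 0$, I would use Lemma~\ref{ChainRule}: applying it to a semi-algebraic curve running in $\mathbb{S}_t$ (along which $\|\phi\|^2$ is constant) forces $\langle v, \dot\phi\rangle = 0$ for $v \in \partial^\infty f(x)$, which combined with the tangential geometry of the sphere shows that the $\mu x$ contribution is orthogonal to the relevant directions. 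Concretely, from $v = -a - \mu x$ one checks $v + \mu x = -a \in -N(x;S)$, and because $v \in \partial^\infty f(x)$ while $\partial^\infty f(x) + \{\text{multiples of } x\}$ interacts with the sphere's normal only through $\mu$, one reduces to the hypothesis to conclude $v + \mu x \in \partial^\infty f(x) \cap (-N(x;S))$, whence $v + \mu x = 0$ by $\mathrm{(QC)}_{\infty}$. Thus $v = -\mu x$, giving $-v = \mu x \in N(x;\mathbb{S}_t)$; but also $-v = a + b$ with $a = 0$, so $-v \in N(x;S)$ as well. Then $-v \in N(x;S) \cap N(x;\mathbb{S}_t)$.

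Finally, to close the argument I would invoke Lemma~\ref{BD33}: I have $-v \in N(x;S)$ and $-v = \mu x \in N(x;\mathbb{S}_t)$, so $-(-v) = v \in -N(x;\mathbb{S}_t)$, giving $-v \in N(x;S) \cap (-N(x;\mathbb{S}_t)) = \{0\}$, hence $v = 0$. This exhausts the arbitrary choice of $v$ and proves the asserted equality $\partial^\infty f(x) \cap \big(-N(x;S\cap \mathbb{S}_t)\big) = \{0\}$. The delicate point, as noted, is justifying the separation $v + \mu x \in \partial^\infty f(x)$, and I would isolate that as the one step requiring the horizon-subdifferential structure rather than purely set-theoretic manipulation of the cones.
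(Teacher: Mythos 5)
Your reduction via Lemma~\ref{BD34} to showing $\partial^\infty f(x)\cap\big(-N(x;S)-N(x;\mathbb{S}_t)\big)=\{0\}$ matches the paper's first step, but the crux of your argument has a genuine gap. After writing $-v = a + \mu x$ with $a \in N(x;S)$, you need $v + \mu x \in \partial^\infty f(x)$ in order to feed $v+\mu x = -a$ into $\mathrm{(QC)}_\infty$. Nothing justifies this: $\partial^\infty f(x)$ is a cone, but it is not stable under adding multiples of $x$, and you yourself flag this as the delicate point without supplying a proof. The appeal to Lemma~\ref{ChainRule} cannot fill the hole, for two reasons: that lemma applies to semi-algebraic curves taking values in $\mathrm{dom} f$ (a curve in $\mathbb{S}_t$ through $x$ need not stay in $\mathrm{dom} f$), and its conclusion holds only for all but finitely many parameter values, so it can never be invoked at the one prescribed point $x$. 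Moreover, even granting $v+\mu x=0$, your closing step is a non sequitur: from $-v = 0 + \mu x$ you may conclude $-v \in N(x;\mathbb{S}_t)$, not $-v \in N(x;S)$, so Lemma~\ref{BD33} does not apply and $v=0$ does not follow; indeed $\mathrm{(QC)}_\infty$ says nothing about multiples of $x$ lying in $\partial^\infty f(x)$.

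The deeper issue is that a pointwise argument at a fixed radius $t$ cannot work; the statement is genuinely ``at infinity.'' The paper argues by contradiction: if no $R$ works, there is a sequence $x_k$ with $\|x_k\|\to+\infty$, $\omega_k \in -N(x_k;S)$, $\mu_k \in \mathbb{R}$ and $\omega_k+\mu_k x_k \in \partial^\infty f(x_k)\setminus\{0\}$. The curve selection lemma at infinity (Lemma~\ref{CurveSelectionLemmaAtInfinity}) turns this into an analytic semi-algebraic curve $(\phi,\omega,\mu)$ along which the failure persists, with $\|\phi(t)\|$ strictly increasing (Lemma~\ref{MonotonicityLemma}). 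Only then does Lemma~\ref{ChainRule} apply, along the curve, giving
\begin{equation*}
0 \;=\; \langle \omega(t)+\mu(t)\phi(t), \dot{\phi}(t)\rangle \;=\; \frac{\mu(t)}{2}\,\frac{d}{dt}\|\phi(t)\|^2 ,
\end{equation*}
so $\mu(t)=0$ and $\omega(t)\in\partial^\infty f(\phi(t))\setminus\{0\}$ contradicts $\mathrm{(QC)}_\infty$. It is precisely the transversality of the curve to the spheres (the radius strictly increasing) that kills the coefficient $\mu$ --- information unavailable when you work inside a single sphere. This also explains why your choice of $R$ as the maximum of the three earlier thresholds is unjustified: the correct $R$ is produced by the contradiction argument and may need to be larger.
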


\begin{proof}
In view of Lemma~\ref{BD34}, it suffices to prove that there exists $R>0$ such that for all $t > R$ and all $x\in \mathrm{dom} f \cap  S \cap \mathbb{S}_t,$
\begin{equation*}
\partial^\infty f(x)\cap \big( -N(x;S)-N(x; \mathbb{S}_t) \big) =\{0\}.
\end{equation*}
By contradiction, there exist sequences $x_k \in \mathrm{dom} f \cap  S$ with $\lim_{k \to +\infty}\|x_k \|=+\infty,$ $\omega_k\in -N(x_k; S)$ and $\mu_k \in \mathbb{R}$ such that $\omega_k + \mu_k x_k \in \partial^\infty f(x_k) \setminus \{0\}.$  Applying the Curve Selection Lemma at infinity (Lemma~\ref{CurveSelectionLemmaAtInfinity}) for the semi-algebraic set
$$\{(x, \omega, \mu) \in \mathbb{R}^n \times \mathbb{R}^n \times \mathbb{R} \ | \ x \in \mathrm{dom} f \cap  S, \ \omega \in -N(x; S), \ \omega + \mu x \in
\partial^\infty f(x)\setminus\{0\} \}$$
and the semi-algebraic map
$$\mathbb{R}^n \times \mathbb{R}^n \times \mathbb{R} \to \mathbb{R}, \quad (x, \omega, \mu) \mapsto \|x\|,$$
we get an analytic semi-algebraic curve $(\phi, \omega, \mu) \colon (R, +\infty) \to\R^n \times \R^n \times \R$ such that
$\lim_{t \to +\infty} \|\phi(t) \| = +\infty$ and for all $t > R,$
\begin{align*}
&\phi(t) \in \mathrm{dom} f \cap  S, \ \omega(t) \in -N(\phi(t);S) \ \textrm{ and } \ \omega(t) + \mu(t)\phi(t)\in \partial^\infty f(\phi(t))\setminus\{0\}.
\end{align*}
In view of Lemma~\ref{MonotonicityLemma},  we may assume that the function $(R, +\infty) \to \mathbb{R}, t \mapsto \|\phi(t)\|,$ is strictly increasing (perhaps after increasing $R$); in particular, $\frac{d}{d t}\|\phi(t)\|^2 > 0$ for all $t > R.$ By Lemma~\ref{ChainRule}, we have for all but finitely many $t > R,$
\begin{eqnarray*}
0 &=& \langle \omega(t)+\mu(t)\phi(t), \dot{\phi}({{t}}) \rangle \\
   &=& \langle \omega(t), \dot{\phi}(t) \rangle + \frac{\mu(t)}{2}\frac{d}{d t}\|\phi(t)\|^2 \\
   &=& - \frac{d}{d t}(\delta_S \circ \phi)(t) + \frac{\mu(t)}{2}\frac{d}{d t}\|\phi(t)\|^2 \ = \ \frac{\mu(t)}{2}\frac{d}{d t}\|\phi(t)\|^2.
\end{eqnarray*}
Hence $\mu(t) = 0,$ and so $\omega(t) \in\partial^\infty f(\phi(t))\setminus\{0\},$ which contradicts our assumption that $\mathrm{(QC)}_{\infty}$  holds.
\end{proof}

\begin{definition}
{\rm By the {\em tangency variety of $f$ on $S$}, we mean the set
\begin{equation*}
\Gamma(f, S) :=  \{x \in \mathrm{dom} f \cap  S\ | \  \textrm{there exists $\mu \in \mathbb{R}$ such that } 0\in\partial f(x)+N(x;S)+\mu x\}.
\end{equation*}
}\end{definition}

Observe that $\Gamma(f, S)$ is a semi-algebraic set containing $\Sigma(f, S).$ Moreover, we have

\begin{lemma}\label{BD38}
Assume that $\mathrm{(QC)}_{\infty}$ holds. Then the tangency variety $\Gamma(f, S)$ is nonempty and unbounded.
\end{lemma}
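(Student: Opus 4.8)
The plan is to manufacture, for every sufficiently large radius $t$, a point $x_t \in S \cap \mathbb{S}_t$ that belongs to $\Gamma(f,S)$; since $\|x_t\| = t$ may be taken arbitrarily large, this simultaneously yields nonemptiness and unboundedness. First I would exploit the standing assumption that $\mathrm{dom} f \cap S$ is unbounded, together with Corollary~\ref{Corollary210} applied to the semi-algebraic set $\mathrm{dom} f \cap S$, to produce $R_0 > 0$ such that $\mathrm{dom} f \cap S \cap \mathbb{S}_t \neq \emptyset$ for all $t > R_0$. For each such $t$ the set $S \cap \mathbb{S}_t$ is closed and bounded, hence compact, while $f$ is lower semi-continuous and not identically $+\infty$ on it; consequently $f$ attains its infimum over $S \cap \mathbb{S}_t$ at some $x_t \in \mathrm{dom} f \cap S \cap \mathbb{S}_t$. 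In particular $x_t$ is a local minimizer of $f$ on the closed set $S \cap \mathbb{S}_t$.

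Next I would apply the Fermat rule (Theorem~\ref{FermatRule}) to the minimization of $f$ over the closed constraint set $S \cap \mathbb{S}_t$. Its qualification condition reads $\partial^\infty f(x_t) \cap \big(-N(x_t; S \cap \mathbb{S}_t)\big) = \{0\}$, and this is precisely what Lemma~\ref{BD36} delivers, under the hypothesis $\mathrm{(QC)}_{\infty}$, for all $t$ larger than some $R \ge R_0$ and all points of $\mathrm{dom} f \cap S \cap \mathbb{S}_t$. Hence the Fermat rule gives $0 \in \partial f(x_t) + N(x_t; S \cap \mathbb{S}_t)$ for every $t > R$.

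Finally I would convert this into membership in $\Gamma(f,S)$. By Lemma~\ref{BD34}, after enlarging $R$ if necessary, one has the inclusion $N(x_t; S \cap \mathbb{S}_t) \subset N(x_t; S) + N(x_t; \mathbb{S}_t)$, and since $x_t \in \mathbb{S}_t$ the normal cone to the sphere is $N(x_t; \mathbb{S}_t) = \{\mu x_t \mid \mu \in \mathbb{R}\}$, as recorded in the preliminaries. Combining these facts produces some $\mu \in \mathbb{R}$ with $0 \in \partial f(x_t) + N(x_t; S) + \mu x_t$, which is exactly the defining condition for $x_t \in \Gamma(f,S)$. Letting $t$ range over $(R, +\infty)$, the resulting points satisfy $\|x_t\| = t$, so $\Gamma(f,S)$ is nonempty and unbounded.

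I expect the only genuinely delicate point to be the verification of the qualification condition for the Fermat rule. It cannot be read off directly from $\mathrm{(QC)}_{\infty}$, which controls $\partial^\infty f(x) \cap (-N(x;S))$ rather than $\partial^\infty f(x) \cap (-N(x; S \cap \mathbb{S}_t))$; bridging this gap is exactly the purpose served by the preparatory Lemmas~\ref{BD33}, \ref{BD34} and \ref{BD36}, and invoking them in the correct order is the crux of the argument. The remaining ingredients—compactness of $S \cap \mathbb{S}_t$, attainment of the infimum by a proper lower semi-continuous function on a compact set, and the elementary form of the normal cone to a sphere—are routine.
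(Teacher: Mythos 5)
Your proposal is correct and follows essentially the same route as the paper's proof: minimize $f$ over the compact set $S \cap \mathbb{S}_t$ (nonempty for large $t$ by Corollary~\ref{Corollary210}), apply the Fermat rule with the qualification condition supplied by Lemma~\ref{BD36}, and then use Lemma~\ref{BD34} together with $N(x_t;\mathbb{S}_t) = \{\mu x_t \mid \mu \in \mathbb{R}\}$ to place the minimizer in $\Gamma(f,S)$. The only cosmetic difference is that you spell out the compactness and qualification-condition bookkeeping more explicitly than the paper does.
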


\begin{proof}
Since the set $\mathrm{dom} f \cap  S$ is semi-algebraic and unbounded, it follows from Corollary~\ref{Corollary210} that
there exists a real number $R > 0$ such that for any $t > R,$ the set
$$\{x \in \mathrm{dom} f \cap  S \ | \ \|x\|^2 = t^2 \}$$
is nonempty and bounded. Hence, by the lower semi-continuity of $f,$ the optimization problem
\begin{equation*}
\textrm{minimize } \ f(x) \quad \textrm{ subject to } \quad x \in  S \  \textrm{ and } \  \|x\|^2 = t^2
\end{equation*}
has at least one optimal solution, say, $\phi(t).$ Clearly, $\phi(t) \in \mathrm{dom} f.$
By Theorem~\ref{FermatRule} and Lemmas~\ref{BD34}, \ref{BD36}, we have for all $t$ large enough,
\begin{eqnarray*}
0 \ \in \  \partial f(\phi(t))+N(\phi(t);S\cap \mathbb{S}_t) &\subset& \partial f(\phi(t))+N(\phi(t);S)+N(\phi(t);\mathbb{S}_t) \\
&=& \partial f(\phi(t))+N(\phi(t);S)+\{\mu\phi(t) \ | \ \mu\in\R\}.
\end{eqnarray*}
Therefore, $\phi(t)\in \Gamma(f,S).$ Since $\|\phi(t)\| = t,$ $\Gamma(f,S)$ is unbounded.
\end{proof}

By Lemma~\ref{PiecewiseContinuity}, there is a finite partition of $\Gamma(f, S)$ into semi-algebraic sets $C_i, i = 1, \ldots, \ell$ such that the restriction of $f$ on ${C_i}$ is continuous. Applying Hardt's triviality theorem (Theorem~\ref{HardtTheorem}) for the continuous semi-algebraic function
$$\rho \colon \Gamma(f, S) \rightarrow \mathbb{R}, \quad x \mapsto \|x\|,$$
we find a real number $R > 0,$ semi-algebraic sets $F_i, i = 1, \ldots, \ell$ and a semi-algebraic homeomorphism
$$h \colon (\cup_{i = 1}^{\ell} F_i) \times (R, +\infty) \rightarrow   \Gamma(f, S) \setminus \mathbb{B}_{R}$$
such that $h(F_i \times (R, +\infty)) = C_i \setminus \mathbb{B}_R$ for $i = 1, \ldots, \ell$ and the following diagram commutes:
\begin{equation*}
\begin{tikzcd}
\Gamma(f, S) \setminus \mathbb{B}_R \ar[r, "\rho"]
&  (R, +\infty) \\
(\cup_{i = 1}^{\ell} F_i) \times (R, +\infty)  \ar[u, "h"] \ar[ru, shift left, "\pi"]
\end{tikzcd}
\end{equation*}
where $\pi$ is the projection on the second component of the product, i.e., $\pi(x, t) = t.$ Since $F_i$ is semi-algebraic, the  number of its connected components, say, $p_i,$ is finite.
Then $C_i \setminus \mathbb{B}_R$ has exactly $p_i$ connected components, which are unbounded semi-algebraic sets. Therefore, we may decompose the set $\Gamma(f, S) \setminus \mathbb{B}_R $ as a disjoint union of finitely many semi-algebraic sets
$\Gamma_k, k = 1, \ldots, p := \sum_{i = 1}^\ell p_i$ such that the following conditions hold:
\begin{enumerate}[{\rm (i)}]
\item $\Gamma_k$ is connected and unbounded;
\item for each $t > R,$ the set $\Gamma_k \cap \mathbb{S}_t$ is nonempty and connected; and
\item the restriction of $f$ on $\Gamma_k$ is continuous.
\end{enumerate}
Corresponding to each $\Gamma_k,$ let $$f_k \colon (R, +\infty) \rightarrow \mathbb{R}, \quad t \mapsto f_k(t),$$
be the function defined by $f_k(t) :=  f(x),$ where $x \in \Gamma_k \cap \mathbb{S}_t.$ The definition is well-posed as shown in the following lemma.

\begin{lemma}\label{BD39}
Assume that $\mathrm{(QC)}_{\infty}$ holds. For all $R$ large enough and all $k = 1, \ldots, p,$ the following statements hold:
\begin{enumerate}
\item[{\rm (i)}]  The function $f_k$ is well-defined and semi-algebraic$;$
\item[{\rm (ii)}] The function $f_k$ is either constant or strictly monotone$;$
 \item[{\rm (iii)}] The function $f_k$ is constant if and only if $\Gamma_k \subset \Sigma(f, S).$
\end{enumerate}
\end{lemma}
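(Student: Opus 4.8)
The plan is to treat the three items in turn, using the structural properties of the pieces $\Gamma_k$ (connectedness of $\Gamma_k$ and of its slices $\Gamma_k \cap \mathbb{S}_t$, and continuity of $f|_{\Gamma_k}$) together with the chain rule (Lemma~\ref{ChainRule}) as the main computational engine, and to reserve the genuine difficulty for the ``only if'' part of item (iii). For item (i), I would first fix $t > R$ and show that $f$ is constant on the slice $\Gamma_k \cap \mathbb{S}_t$. Since this slice is connected and semi-algebraic, any two of its points can be joined by a continuous semi-algebraic path $\psi \colon [0,1] \to \Gamma_k \cap \mathbb{S}_t$ (Lemma~\ref{pathconnectedness}). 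Because $\psi(s) \in \Gamma(f,S)$ for every $s,$ by definition there exist $v(s) \in \partial f(\psi(s)),$ $w(s) \in N(\psi(s);S)$ and $\mu(s) \in \R$ with $v(s) + w(s) + \mu(s)\psi(s) = 0.$ Applying Lemma~\ref{ChainRule} to $f$ and to $\delta_S$ along $\psi,$ and using that $\langle \psi(s), \dot{\psi}(s)\rangle = \tfrac12 \tfrac{d}{ds}\|\psi(s)\|^2 = 0$ on the sphere together with $\langle w(s), \dot{\psi}(s)\rangle = \tfrac{d}{ds}(\delta_S \circ \psi)(s) = 0,$ I would obtain $\tfrac{d}{ds}(f\circ\psi)(s) = \langle v(s), \dot{\psi}(s)\rangle = 0$ for all but finitely many $s.$ As $f \circ \psi$ is continuous, it is therefore constant, so $f$ takes a single value on $\Gamma_k \cap \mathbb{S}_t$ and $f_k$ is well-defined; its graph is the image of a semi-algebraic set under a semi-algebraic map, so $f_k$ is semi-algebraic by the Tarski--Seidenberg theorem (Theorem~\ref{TarskiSeidenbergTheorem}).

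Item (ii) follows almost immediately from the monotonicity lemma (Lemma~\ref{MonotonicityLemma}): it partitions $(R, +\infty)$ into finitely many intervals on which $f_k$ is constant or strictly monotone, so after discarding all but the last breakpoint, i.e. after enlarging $R$ and then taking the largest such $R$ over the finitely many indices $k,$ the function $f_k$ becomes globally constant or strictly monotone. For the ``if'' direction of item (iii), suppose $\Gamma_k \subset \Sigma(f,S).$ Then $f(\Gamma_k) \subset f(\Sigma(f,S)),$ which is finite by Lemma~\ref{BD32}; since $\Gamma_k$ is connected and $f|_{\Gamma_k}$ is continuous, $f(\Gamma_k)$ is a connected subset of a finite set, hence a single point, and so $f_k$ is constant.

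The substantive part is the ``only if'' direction of item (iii). Assuming $f_k$ is constant, one has $f \equiv c$ on $\Gamma_k.$ I would fix $x_0 \in \Gamma_k$ with $\|x_0\| = t_0$ and, using the trivialization $h$ constructed before the lemma, form the radial semi-algebraic curve $\gamma(t) := h(\xi_0, t)$ through $x_0 = h(\xi_0, t_0),$ which lies in $\Gamma_k$ and satisfies $\|\gamma(t)\| = t.$ Picking at each $t$ elements $v(t), w(t), \mu(t)$ realizing the tangency relation along $\gamma$ and running the chain rule exactly as above, but now with $\langle \gamma(t), \dot{\gamma}(t)\rangle = t \neq 0$ and $\tfrac{d}{dt}(f\circ\gamma)(t) = 0,$ I would get $\mu(t)\,t = 0,$ hence $\mu(t) = 0,$ hence $\gamma(t) \in \Sigma(f,S)$ for all but finitely many $t.$ In particular there is a sequence $t_j \to t_0$ with $\gamma(t_j) \in \Sigma(f,S)$ and $\gamma(t_j) \to x_0.$

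The hard part, and the only place where $\mathrm{(QC)}_{\infty}$ is genuinely needed here, is to pass from ``$\gamma(t_j) \in \Sigma(f,S)$ along a sequence $t_j \to t_0$'' to ``$x_0 \in \Sigma(f,S),$'' that is, to show that $\Sigma(f,S)$ is closed far from the origin. I would prove that for $R$ large enough $\Sigma(f,S) \setminus \mathbb{B}_{R}$ is closed by an outer-semicontinuity argument: given $x_j \to x_0$ in $\Sigma(f,S)$ with $f(x_j) \to f(x_0)$ (automatic here, since $f$ is constant on $\Gamma_k$), choose $v_j \in \partial f(x_j)$ and $w_j = -v_j \in N(x_j;S).$ If $\{v_j\}$ is bounded, a convergent subsequence together with the outer semicontinuity of $\partial f$ and of $N(\cdot;S)$ yields $v \in \partial f(x_0)$ and $-v \in N(x_0;S),$ so $x_0 \in \Sigma(f,S).$ If $\{v_j\}$ is unbounded, then normalizing $u_j := v_j/\|v_j\|$ and passing to a limit $u \neq 0$ produces $u \in \partial^\infty f(x_0)$ and $-u \in N(x_0;S)$ (the latter because $N(\cdot;S)$ is a cone), contradicting $\mathrm{(QC)}_{\infty}$ at $x_0$ once $\|x_0\| > R.$ This rules out blow-up, gives the required closedness, and so forces $x_0 \in \Sigma(f,S);$ as $x_0 \in \Gamma_k$ was arbitrary, $\Gamma_k \subset \Sigma(f,S).$ I expect this closedness step, reconciling the merely cofinite vanishing of the multiplier $\mu(t)$ with a conclusion that must hold at \emph{every} point of $\Gamma_k,$ to be the main obstacle, precisely because Lemma~\ref{ChainRule} only delivers information for cofinitely many parameter values.
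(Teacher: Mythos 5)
Your proof is correct, and items (i), (ii), and the sufficiency half of (iii) coincide with the paper's own argument (slice-wise constancy of $f$ via the chain rule plus path connectedness; the monotonicity lemma after enlarging $R$; finiteness of $f(\Sigma(f,S))$ combined with continuity and connectedness of $\Gamma_k$). Where you genuinely diverge is the necessity half of (iii). The paper never proves any closedness of $\Sigma(f,S)$: it instead applies Hardt's triviality theorem a \emph{second} time, to the semi-algebraic set $\Gamma_k \setminus \Sigma(f,S)$ (assumed nonempty for contradiction), so that the resulting radial curve $\phi$ with $\|\phi(t)\| = t$ lies \emph{entirely outside} $\Sigma(f,S)$ by construction; the chain-rule computation $\mu(t)\,t = 0$ then places $\phi(t)$ inside $\Sigma(f,S)$ for all but finitely many $t$, an immediate contradiction, with no limit-point argument needed. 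Your route --- a radial curve through an \emph{arbitrary} $x_0 \in \Gamma_k$, cofinite membership $\gamma(t_j) \in \Sigma(f,S)$, and then the f-attentive closedness of $\Sigma(f,S) \setminus \mathbb{B}_R$ under $\mathrm{(QC)}_\infty$ --- is also sound: the facts you invoke (outer semicontinuity of the limiting normal cone to the closed sets $S$ and $\mathrm{epi} f$, hence of $\partial f$ under f-attentive convergence, and the recovery of horizon subgradients as limits of normalized unbounded subgradients, the normal cone being a cone) are standard and available in the cited monographs of Mordukhovich and Rockafellar--Wets, though they are nowhere stated in the paper, and the f-attentive hypothesis is indeed automatic since $f$ is constant on $\Gamma_k$. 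The trade-off is clear: the paper's argument stays entirely inside the semi-algebraic toolkit it has already assembled and needs no stability theory for subdifferentials, so it is shorter; yours establishes a reusable intermediate fact (local f-attentive closedness of the critical set far from the origin under $\mathrm{(QC)}_\infty$) and treats a prescribed point of $\Gamma_k$ directly, but at the cost of importing variational-analysis machinery. It is also worth noting that your proof uses $\mathrm{(QC)}_\infty$ inside part (iii), whereas the paper's proof of this lemma invokes the qualification condition only through the ambient construction of the decomposition $\Gamma(f,S) \setminus \mathbb{B}_R = \cup_k \Gamma_k$.
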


\begin{proof}
(i) Fix $t > R$ and let $\phi\colon[0,1]\to\R^n$ be a continuous semi-algebraic curve such that for all $\tau\in[0, 1]$ we have $\phi(\tau)\in\Gamma_k\cap\mathbb{S}_t.$ By definition, $\|\phi(\tau)\| =t$ and there exist $\omega(\tau) \in -N(\phi(\tau);S)$  and $\mu(\tau)\in \R$ such that
\begin{eqnarray*}
 \omega(\tau)  + \mu(\tau)\phi(\tau) \in\partial f(\phi(\tau)).
\end{eqnarray*}
By Lemma~\ref{ChainRule}, for all but finitely many $\tau \in [0, 1],$ the maps $\phi$ and $f \circ \phi$ are analytic at $\tau$ and satisfy
\begin{eqnarray*}
\frac{d}{d \tau} (f \circ \phi)(\tau)
&=& \langle \omega(\tau) + \mu(\tau) \phi(\tau), \dot{\phi}(\tau)\rangle \\
&=& \langle \omega(\tau), \dot{\phi}(\tau) \rangle + \mu(\tau) \langle   \phi(\tau), \dot{\phi}(\tau)\rangle \\
&=& -\frac{d}{d \tau} (\delta_S \circ \phi)(\tau)  + \frac{\mu(\tau)}{2} \frac{d}{d \tau} \|\phi(\tau)\|^2 \\
&=& 0 + \frac{\mu(\tau)}{2} \frac{d}{d \tau} t^2 \ = \ 0.
\end{eqnarray*}
It follows that the function $f \circ \phi$ is constant. Now, since the semi-algebraic set $\Gamma_k \cap \mathbb{S}_t$ is path connected, any two points in $\Gamma_k \cap \mathbb{S}_t$ can be joined (in $\Gamma_k \cap \mathbb{S}_t$) by a continuous semi-algebraic curve (by Lemma~\ref{pathconnectedness}). Therefore, $f$ is constant on $\Gamma_k \cap \mathbb{S}_t,$ and so $f_k$ is well-defined.
The semi-algebraicity of $f_k$ is easy to check, and so is left to the reader.

(ii) Increasing $R$ (if necessary) and applying the monotonicity lemma (Lemma~\ref{MonotonicityLemma}), the claim follows.

(iii) {\em Necessity.}  By Hardt's triviality theorem (Theorem~\ref{HardtTheorem}) and by increasing $R$ (if necessary), we get a semi-algebraic set $F \subset \mathbb{R}^n$ and a semi-algebraic homeomorphism $\overline h \colon F \times (R, +\infty)\to \Gamma_k \setminus \Sigma(f, S)$ such that the following diagram commutes:
\begin{equation*}
\begin{tikzcd}
\Gamma_k\setminus \Sigma(f,S) \ar[r, "\rho"]
&  (R, +\infty) \\
F \times(R, +\infty)\ar[u, "\overline h"] \ar[ru, shift left, "\pi"]
\end{tikzcd}
\end{equation*}
where $\pi$ is the projection on the second component of the product, i.e., $\pi(x, t) = t.$

We now assume that the function $f_k$ is constant but $\Gamma_k \setminus \Sigma(f, S) \ne \emptyset.$ Let $x^*$ be any fixed point in $F$ and define the semi-algebraic function $\phi\colon (R,+\infty)\to \Gamma_k \setminus \Sigma(f, S)$ by $\phi(t)=\overline h(x^*,t)$ for all $t > R.$ The function $f|_{\Gamma_k}$ is constant so is $f\circ \phi.$ Since $\rho \circ \overline{h} = \pi,$ $\|\phi(t)\| = t$ for all $t > R.$ Since $\phi(t)\in\Gamma_k \setminus \Sigma(f, S),$ there exist $\omega(t) \in - N(\phi(t);S)$ and $\mu(t) \in \R$ such that
\begin{equation*}
\omega(t) + \mu(t)\phi(t) \in \partial f(\phi(t)) .
\end{equation*}
By Lemma~\ref{ChainRule}, for all but finitely many $t > R,$ the maps $\phi$ and $f \circ \phi$ are analytic at $t$ and satisfy
\begin{eqnarray*}
0 \ = \ \frac{d}{dt} (f \circ \phi)(t)
   &=& \langle  \omega(t) + \mu(t)\phi(t), \dot{\phi}({{t}}) \rangle \\
   &=& \langle \omega(t), \dot{\phi}(t) \rangle + \frac{\mu(t)}{2}\frac{d}{d t}\|\phi(t)\|^2 \\
   &=& - \frac{d}{d t}(\delta_S \circ \phi)(t) + \mu(t) t \ = \  \mu(t) t.
\end{eqnarray*}
Hence $\mu(t) = 0$ and so $\phi(t) \in \Sigma(f, S),$ a contradiction.

{\em Sufficiency.} Assume that $\Gamma_k \subset \Sigma(f, S).$ It suffices to show that the restriction of $f$ on $\Gamma_k$ is constant. To see this, let $\phi \colon [0, 1] \to \mathbb{R}^n$ be a continuous semi-algebraic curve lying in $\Gamma_k.$ We have $(f \circ \phi) ([0, 1]) \subset f(\Gamma_k) \subset f(\Sigma(f, S))$-a finite set in view of Lemma~\ref{BD32}. Hence, by Lemma~\ref{MonotonicityLemma}, there are finitely many points $0 = t_0 < t_1 < \cdots < t_\ell = 1$ such that the restriction of $f \circ \phi $ to the interval $(t_i, t_{i + 1}), i = 0, \ldots, \ell - 1,$ is constant. On the other hand, by construction, the restriction of $f$ on $\Gamma_k$ is continuous. Therefore, $f \circ \phi$ is constant. Finally, since the semi-algebraic set $\Gamma_k$ is path connected, any two points in $\Gamma_k$ can be joined (in $\Gamma_k$) by a continuous semi-algebraic curve (by Lemma~\ref{pathconnectedness}). Therefore, $f$ is constant on $\Gamma_k,$ which completes the proof.
\end{proof}

For any $t > R,$ the set $\mathrm{dom} f \cap  S \cap \mathbb{S}_t$ is nonempty and bounded. Since $f$ is lower semi-continuous and semi-algebraic, the function
$$\psi \colon (R, +\infty) \rightarrow \mathbb{R}, \quad t \mapsto \psi(t) := \min_{x \in S \cap \mathbb{S}_t} f(x),$$
is well-defined and semi-algebraic.
With this definition, we have the following three lemmas, whose proofs are similar to those in \cite[Lemmas~3.18, 3.19~and~3.20]{PHAMTS2023} and are included here for the sake of completeness.

\begin{lemma} \label{BD310}
Assume that $\mathrm{(QC)}_{\infty}$ holds. Then
for $R$ large enough, the following statements hold:
\begin{enumerate}
\item [{\rm (i)}] Any two of the functions $\psi, f_1, \ldots, f_p$ either coincide or are distinct.

\item [{\rm (ii)}] $\psi(t) = \min_{k = 1, \ldots, p} f_k(t)$ for all $t > R.$

\item [{\rm (iii)}] There is an index $k \in \{1, \ldots, p\}$ such that $\psi(t) = f_k(t)$ for all $t > R.$
\end{enumerate}
\end{lemma}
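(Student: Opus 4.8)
The plan is to dispatch the three statements in turn, obtaining (iii) as a formal consequence of (i) and (ii) and treating the reverse inequality in (ii) as the analytic core of the lemma.

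For (i) I would invoke the one-variable semi-algebraic dichotomy. Given any two functions $g, h$ among $\psi, f_1, \ldots, f_p$, their difference $g - h$ is semi-algebraic on $(R, +\infty)$, so by the monotonicity lemma (Lemma~\ref{MonotonicityLemma}) there is a largest breakpoint beyond which $g - h$ is analytic and either constant or strictly monotone. If $g - h$ vanishes identically on that final interval, then $g$ and $h$ coincide after increasing $R$; otherwise $g - h$ has at most one zero there (none if it is a nonzero constant, at most one if strictly monotone), so after increasing $R$ past that zero the two functions are nowhere equal, i.e. distinct. Since there are finitely many pairs, a single $R$ serves all of them at once.

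For (ii) I would establish the two inequalities $\psi(t) \le \min_{k} f_k(t)$ and $\psi(t) \ge \min_{k} f_k(t)$ separately. For the first, fix $t > R$ and an index $k$; by construction $\Gamma_k \cap \mathbb{S}_t$ is nonempty, so any $x \in \Gamma_k \cap \mathbb{S}_t \subset \mathrm{dom} f \cap S \cap \mathbb{S}_t$ yields $\psi(t) \le f(x) = f_k(t)$, hence $\psi(t) \le \min_{k} f_k(t)$. For the reverse inequality I would reuse the sphere-constrained minimizer from the proof of Lemma~\ref{BD38}: the problem of minimizing $f$ over $S \cap \mathbb{S}_t$ admits an optimal solution $\phi(t)$ with $f(\phi(t)) = \psi(t)$, and the Fermat rule (Theorem~\ref{FermatRule}) together with Lemmas~\ref{BD34} and~\ref{BD36} shows $\phi(t) \in \Gamma(f, S)$ once $t$ is large. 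Since $t > R$ forces $\phi(t) \in (\Gamma(f, S) \setminus \mathbb{B}_R) \cap \mathbb{S}_t = \bigsqcup_{k} (\Gamma_k \cap \mathbb{S}_t)$, we obtain $\phi(t) \in \Gamma_k \cap \mathbb{S}_t$ for some $k$, whence $\psi(t) = f(\phi(t)) = f_k(t) \ge \min_{k} f_k(t)$.

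For (iii) I would combine (i) and (ii). By (i), for each $k$ either $\psi \equiv f_k$ on $(R, +\infty)$ or $\psi(t) \ne f_k(t)$ for every $t > R$; let $K$ collect the indices realizing the first alternative. If $K$ were empty, then for every $t > R$ and every $k$ we would have $\psi(t) \ne f_k(t)$, contradicting (ii), which asserts that the finite minimum $\min_{k} f_k(t)$ equals $\psi(t)$ and is therefore attained by some $f_k(t)$. Hence $K \ne \emptyset$, and any $k \in K$ satisfies $\psi(t) = f_k(t)$ for all $t > R$. I expect the reverse inequality in (ii) to be the main obstacle, as it is the only step that is not purely formal: it rests on certifying that the minimizer $\phi(t)$ genuinely lies on the tangency variety $\Gamma(f, S)$. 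This is precisely where $\mathrm{(QC)}_{\infty}$ enters—through Lemma~\ref{BD36} to validate the Fermat-rule qualification hypothesis and through Lemma~\ref{BD34} to pass from $N(\phi(t); S \cap \mathbb{S}_t)$ to $N(\phi(t); S) + N(\phi(t); \mathbb{S}_t)$—and both hold only beyond a threshold radius, which is why the conclusion is stated for $R$ large enough.
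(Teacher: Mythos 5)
Your proposal is correct and follows essentially the same route as the paper: the monotonicity lemma handles (i), the reverse inequality in (ii) is obtained exactly as in the paper by applying the Fermat rule (Theorem~\ref{FermatRule}) with Lemmas~\ref{BD34} and~\ref{BD36} to certify that a minimizer of $f$ on $S \cap \mathbb{S}_t$ lies in $\Gamma(f,S)$, and (iii) is deduced formally from (i) and (ii). The only difference is expository: you spell out the dichotomy argument for (i) and the pigeonhole deduction of (iii), which the paper leaves implicit.
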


\begin{proof}
(i) This is an immediate consequence of the monotonicity lemma (Lemma~\ref{MonotonicityLemma}) and the semi-algebraicity of the functions in question.

(ii) By Corollary~\ref{Corollary210}, for all $t > R,$ $\Gamma(f,S)\cap \mathbb{S}_t\neq\emptyset,$ $\Gamma_k\cap \mathbb{S}_t\neq\emptyset,$ $k=1,\ldots,p,$ and $S\cap \mathbb{S}_t\neq\emptyset.$
Since $\Gamma(f,S)\subset S,$
\begin{eqnarray*}
\min_{x \in S \cap \mathbb{S}_t} f(x)\leq \min_{x \in \Gamma(f, S) \cap \mathbb{S}_t} f(x).
\end{eqnarray*}
Let $\tilde x$ be an optimal solution of the problem:
\begin{eqnarray*}
\min_{x \in S \cap \mathbb{S}_t} f(x).
\end{eqnarray*}
By Theorem~\ref{FermatRule}, Lemma~\ref{BD34}, and Lemma~\ref{BD36}, there exists $\mu\geq0$ such that
\begin{eqnarray*}
0\in\partial f(\tilde x)+N(\tilde x; S)+\mu\tilde x.
\end{eqnarray*}
Thus, $\tilde x\in \Gamma(f,S)\cap  \mathbb{S}_t,$  and so,
\begin{eqnarray*}
\min_{x \in S \cap \mathbb{S}_t} f(x)= \min_{x \in \Gamma(f, S) \cap \mathbb{S}_t} f(x).
\end{eqnarray*}
Moreover,
\begin{eqnarray*}
\min_{x \in \Gamma(f, S) \cap \mathbb{S}_t} f(x)  = \min_{x \in \left(\cup_{k=1}^p\Gamma_k\right) \cap \mathbb{S}_t} f(x)= \min_{k = 1, \ldots, p} \min_{x \in \Gamma_k \cap \mathbb{S}_t} f(x)  =  \min_{k = 1, \ldots, p}  f_k(t).
\end{eqnarray*}
Hence $\psi(t) = \min_{k = 1, \ldots, p} f_k(t)$ for all $t > R.$

(iii) This follows from items (i) and (ii).
\end{proof}

We have associated to the function $f$ a finite number of functions $f_k$ of a single variable,
each function $f_k$ is either constant or strictly monotone. In particular, the following limits exist:
$$\lambda_k := \lim_{t \to +\infty} f_k(t) \in \mathbb{R} \cup \{\pm \infty\} \quad \textrm{ for } \quad k = 1, \ldots, p.$$
In view of Lemma~\ref{BD39}(iii), if $f_k \equiv \lambda_k,$ then $\lambda_k \in f(\Sigma(f, S)).$ Furthermore, by Lemma~\ref{BD310}(iii), the limit $\lim_{t \to +\infty} \psi(t)$ exists and equals to $\lambda_k$ for some $k.$

\begin{lemma} \label{BD311}
Assume that $\mathrm{(QC)}_{\infty}$ holds. Then
\begin{eqnarray*}
\lim_{t \to +\infty} \psi(t)  &=&  \min_{k = 1, \ldots, p} \lambda_k.
\end{eqnarray*}
\end{lemma}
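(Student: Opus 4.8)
The plan is to reduce the statement to the two facts already recorded in Lemma~\ref{BD310}, namely the pointwise identity $\psi(t) = \min_{k = 1, \ldots, p} f_k(t)$ for all $t > R$, and the existence of a single index that governs $\psi$ for all large $t$. First I would invoke Lemma~\ref{BD310}(iii) to fix an index $k_0 \in \{1, \ldots, p\}$ such that $\psi(t) = f_{k_0}(t)$ for all $t > R$. Since each $f_k$ is either constant or strictly monotone by Lemma~\ref{BD39}(ii), the limit $\lambda_{k_0} = \lim_{t \to +\infty} f_{k_0}(t)$ exists in $\mathbb{R} \cup \{\pm\infty\}$; consequently $\lim_{t \to +\infty} \psi(t)$ exists and equals $\lambda_{k_0}$.

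Next I would identify $\lambda_{k_0}$ with $\min_{k} \lambda_k$. By Lemma~\ref{BD310}(ii) we have $f_{k_0}(t) = \psi(t) \le f_k(t)$ for every $k$ and every $t > R$. Passing to the limit as $t \to +\infty$ --- legitimate because all the limits $\lambda_k$ exist by Lemma~\ref{BD39}(ii) --- yields $\lambda_{k_0} \le \lambda_k$ for all $k$. Hence $\lambda_{k_0} = \min_{k = 1, \ldots, p} \lambda_k$, and combining this with the previous paragraph gives $\lim_{t \to +\infty} \psi(t) = \lambda_{k_0} = \min_{k = 1, \ldots, p} \lambda_k$, as desired.

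There is essentially no analytic obstacle here: the result is a direct consequence of Lemma~\ref{BD310}, and the only point requiring a word of care is the interchange of the limit with the finite minimum when some $\lambda_k$ may equal $\pm\infty$. Using the uniform index $k_0$ furnished by Lemma~\ref{BD310}(iii) sidesteps this entirely, since one never has to justify $\lim \min = \min \lim$ directly; the inequality $f_{k_0}(t) \le f_k(t)$ survives passage to the limit regardless of whether the limits are finite or infinite. Alternatively, one could argue directly from Lemma~\ref{BD310}(ii) that $\lim_{t \to +\infty} \min_{k} f_k(t) = \min_{k} \lambda_k$ by a short $\varepsilon$-argument, splitting into the cases where $\min_{k} \lambda_k$ is finite, equals $-\infty$, or equals $+\infty$; but the route through part~(iii) is the shorter of the two.
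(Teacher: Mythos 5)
Your proof is correct and follows essentially the same route as the paper's: both rest entirely on Lemma~\ref{BD310}, using part~(ii) to pass the inequality $\psi(t) \le f_k(t)$ to the limit and part~(iii) to identify $\lim_{t \to +\infty}\psi(t)$ with a single $\lambda_{k_0}$. The only difference is cosmetic ordering (you fix the index $k_0$ first, the paper derives the upper bound $\lim\psi \le \min_k \lambda_k$ first), so no further comment is needed.
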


\begin{proof}
Indeed, by Lemma~\ref{BD310}(ii), $\psi(t) \le f_k(t)$ for all $t > R$ and all $k = 1, \ldots, p.$ Letting $t \to +\infty,$ we get
\begin{eqnarray*}
\lim_{t \to +\infty} \psi(t)  &\le& \min_{k = 1, \ldots, p} \lambda_k.
\end{eqnarray*}
On the other hand, by Lemma~\ref{BD310}(iii), there exists an index $k \in \{1, \ldots, p\}$ such that $\psi \equiv f_k,$ and so
\begin{eqnarray*}
\lim_{t \to +\infty} \psi(t) = \lambda_k.
\end{eqnarray*}
Combining this with the previous inequality, we get the desired conclusion.
\end{proof}

\begin{lemma} \label{BD312}
We have
\begin{eqnarray*}
\lim_{t \to +\infty} \psi(t)  &\ge& \inf_{x \in S} f(x)
\end{eqnarray*}
with the equality if $f$ does not attain its infimum on $S.$
\end{lemma}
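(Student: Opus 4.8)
The plan is to prove the inequality and the equality separately; the inequality is immediate, whereas the equality is exactly where the hypothesis that $f$ does not attain its infimum on $S$ enters.

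First I would dispose of the inequality. For every $t$ in the domain $(R, +\infty)$ of $\psi,$ the slice $S \cap \mathbb{S}_t$ is a subset of $S,$ so the point realizing the minimum in $\psi(t) = \min_{x \in S \cap \mathbb{S}_t} f(x)$ belongs to $S,$ and therefore $\psi(t) \ge \inf_{x \in S} f(x) =: m^*.$ I would also record that $\psi$ is a semi-algebraic function of the single variable $t,$ so by the monotonicity lemma (Lemma~\ref{MonotonicityLemma}) it is monotone on a neighborhood of $+\infty;$ hence $\lim_{t \to +\infty} \psi(t)$ exists in $\mathbb{R} \cup \{\pm \infty\}$ with no appeal to $\mathrm{(QC)}_\infty.$ Letting $t \to +\infty$ in $\psi(t) \ge m^*$ then yields $\lim_{t \to +\infty} \psi(t) \ge m^*.$

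For the equality, assume $f$ does not attain its infimum on $S;$ it remains to prove $\lim_{t \to +\infty} \psi(t) \le m^*.$ I would start from a minimizing sequence $\{y_j\} \subset S$ with $f(y_j) \to m^*$ (such a sequence exists, and $m^* < +\infty$ since $\mathrm{dom} f \cap S \neq \emptyset$). The crucial claim, and the main obstacle, is that $\|y_j\| \to +\infty.$ To see this I would argue by contradiction: if some subsequence remained in a ball $\mathbb{B}_M,$ then $S \cap \mathbb{B}_M$ would be compact ($S$ being closed), and the lower semi-continuity of $f$ would force $f$ to attain a minimum on $S \cap \mathbb{B}_M$ at some point $x^*;$ comparing $f(x^*)$ with the values along the subsequence gives $f(x^*) \le m^*,$ hence $f(x^*) = m^*,$ which contradicts the assumption that the infimum is not attained. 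This compactness-plus-lower-semicontinuity step is precisely where the hypothesis is used, and it is the only non-routine part of the argument.

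Finally I would set $t_j := \|y_j\| \to +\infty$ and use $y_j \in S \cap \mathbb{S}_{t_j}$ to obtain $\psi(t_j) \le f(y_j).$ Since the limit $\lim_{t \to +\infty} \psi(t)$ already exists, it equals $\lim_{j \to +\infty} \psi(t_j) \le \lim_{j \to +\infty} f(y_j) = m^*,$ and this reasoning is uniform in whether $m^*$ is finite or equal to $-\infty.$ Combining with the inequality from the first step gives $\lim_{t \to +\infty} \psi(t) = m^* = \inf_{x \in S} f(x),$ which completes the proof.
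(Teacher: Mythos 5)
Your proposal is correct and follows essentially the same route as the paper: the inequality comes from $S \cap \mathbb{S}_t \subset S$, and the equality comes from a minimizing sequence escaping to infinity together with the bound $\psi(\|y_j\|) \le f(y_j)$. The only difference is that you supply details the paper leaves implicit—namely the compactness-plus-lower-semicontinuity argument showing a minimizing sequence must be unbounded when the infimum is not attained, and the appeal to the monotonicity lemma (Lemma~\ref{MonotonicityLemma}) to guarantee that $\lim_{t \to +\infty} \psi(t)$ exists without invoking $\mathrm{(QC)}_\infty$—both of which are accurate and strengthen the exposition.
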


\begin{proof}
Indeed, we have for all $t > R,$
\begin{eqnarray*}
\psi(t)  &=& \min_{x \in S \cap \mathbb{S}_t} f(x) \ \ge \ \inf_{x \in S} f(x).
\end{eqnarray*}
Letting $t \to +\infty,$ we get $\lim_{t \to +\infty} \psi(t) \ge \inf_{x \in S} f(x).$

Now suppose that $f$ does not attain its infimum on $S;$ then there exists a sequence $x_k \in \mathrm{dom} f \cap S$ such that
$$\lim_{k \to +\infty} \|x_k\| = +\infty \quad \textrm{ and } \quad \lim_{k \to +\infty} f(x_k) = \inf_{x \in S} f(x).$$
On the other hand, by definition, it is clear that $\psi(\|x_k\|) \le f(x_k)$ for all $k$ large enough. Therefore,
$\lim_{t \to +\infty} \psi(t) \le \inf_{x \in S} f(x),$ and so the desired conclusion follows.
\end{proof}

Note that in the above lemma we do not assume that $f$ is bounded from below on $S.$

\section{Results} \label{Section4}

Let $f\colon \R^n \to \mathbb{R} \cup \{+\infty\}$ be a lower semi-continuous and semi-algebraic function and let $S$ be a closed semi-algebraic subset of $\R^n$ such that the set $\mathrm{dom} f \cap  S$ is nonempty and unbounded. Consider the constrained optimization problem:
\begin{equation}\label{Problem}
\textrm{minimize } \ f(x) \quad \textrm{ subject to } \quad x \in S. \tag{P}
\end{equation}
Following the approach in \cite{PHAMTS2023}, we provide verifiable necessary and sufficient conditions for the existence of optimal solutions of the problem~\eqref{Problem} as well as the boundedness from below and coercivity of the restriction of $f$ on $S.$ We also present a computable formula for the optimal value of the problem.

Keeping the notation as in the previous section, we can write $\Gamma(f, S) \setminus \mathbb{B}_R = \cup_{k = 1}^p \Gamma_k,$ where
each $\Gamma_k$ is an unbounded connected semi-algebraic set. Corresponding to each $\Gamma_k,$ the semi-algebraic functions
$$f_k \colon (R, +\infty) \rightarrow \mathbb{R}, \quad t \mapsto f_k(t) := f|_{\Gamma_k \cap \mathbb{S}_t},$$
are well-defined, and so are the real numbers
$$\lambda_k := \lim_{t \to +\infty} f_k(t) \in \mathbb{R} \cup \{\pm \infty\}.$$
Also, recall that the semi-algebraic function $\psi \colon (R, +\infty) \rightarrow \mathbb{R}$ is defined by
$$\psi(t) := \min_{x \in S \cap \mathbb{S}_t} f(x).$$
Here and in the following, $R$ is chosen large enough so that the conclusions of Lemmas~\ref{BD33}, \ref{BD34}, \ref{BD36}, \ref{BD39} and \ref{BD310} hold.

\subsection{Boundedness from below}

In this subsection we present necessary and sufficient conditions for the boundedness from below of the objective function $f$ on the constraint set $S.$

\begin{theorem} \label{Theorem41}
Assume that $\mathrm{(QC)}_{\infty}$ holds. Then $f$ is bounded from below on $S$ if and only if it holds that
$$\min_{k = 1, \ldots, p} \lambda_k  >  -\infty.$$
\end{theorem}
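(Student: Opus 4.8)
The plan is to reduce both implications to the two identities already established for the one-variable function $\psi$, namely $\lim_{t\to+\infty}\psi(t) = \min_{k=1,\ldots,p}\lambda_k$ from Lemma~\ref{BD311}, and the comparison $\lim_{t\to+\infty}\psi(t) \ge \inf_{x\in S}f(x)$, with equality when the infimum is not attained, from Lemma~\ref{BD312}. Together these turn $\min_k\lambda_k$ into a faithful surrogate for $\inf_S f$ at infinity, so the whole statement becomes a matter of tracking when each side is finite.

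First I would prove the direct implication. Assuming $f$ is bounded from below on $S$, the value $\inf_{x\in S}f(x)$ is a finite real number. Lemma~\ref{BD312} then gives $\min_k\lambda_k = \lim_{t\to+\infty}\psi(t) \ge \inf_{x\in S}f(x) > -\infty$, where the first equality is Lemma~\ref{BD311}. This is immediate and requires no case analysis.

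For the converse I would argue by contradiction. Suppose $\min_k\lambda_k > -\infty$ but $f$ is \emph{not} bounded from below on $S$, so that $\inf_{x\in S}f(x) = -\infty$. Since $f$ takes only real values on its domain, it cannot attain the value $-\infty$; hence $f$ does not attain its infimum on $S$. The equality clause of Lemma~\ref{BD312} therefore applies and yields $\lim_{t\to+\infty}\psi(t) = \inf_{x\in S}f(x) = -\infty$. On the other hand, Lemma~\ref{BD311} gives $\lim_{t\to+\infty}\psi(t) = \min_k\lambda_k > -\infty$, a contradiction. Hence $f$ must be bounded from below on $S$.

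Both directions hinge entirely on the preparatory lemmas, so the real content lies there rather than in the theorem itself. The only point demanding care is the invocation of the equality clause in Lemma~\ref{BD312}: one must first observe that unboundedness from below forces non-attainment of the infimum (a $-\infty$ infimum can never be realized by a real value), which is exactly the hypothesis under which that clause is valid. Note also that $\mathrm{(QC)}_{\infty}$ enters only implicitly, through Lemma~\ref{BD311}, where it guarantees that $\psi$ coincides with one of the finitely many $f_k$ near infinity and hence that $\lim_{t\to+\infty}\psi(t)$ equals $\min_k\lambda_k$.
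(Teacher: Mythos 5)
Your proof is correct and takes essentially the same route as the paper: both reduce the theorem to the facts that $\lim_{t\to+\infty}\psi(t)=\min_{k=1,\ldots,p}\lambda_k$ and that $\lim_{t\to+\infty}\psi(t)$ faithfully tracks $\inf_{x\in S}f(x)$, the only difference being that you cite Lemmas~\ref{BD311} and~\ref{BD312} as black boxes while the paper re-derives the needed inequalities inline from Lemma~\ref{BD310} and an unbounded minimizing sequence. Your observation that an infimum equal to $-\infty$ can never be attained (so the equality clause of Lemma~\ref{BD312} applies) is precisely the point the paper handles with its ``we may assume $\|x_l\|\to+\infty$'' step.
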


\begin{proof}
From the proof of Lemma~\ref{BD310},
\begin{eqnarray*}
\psi(t) = \inf_{x \in S\cap \mathbb{S}_t} f(x)=\min_{k = 1, \ldots, p} f_k(t).
\end{eqnarray*}
So, if $f$ is bounded from below on $S,$ for $k=1,\ldots,p,$
\begin{eqnarray*}
-\infty<\inf_{x\in S}f(x)\leq \inf_{x \in S\cap \mathbb{S}_t} f(x)\leq f_k(t),
\end{eqnarray*}
and so, $ -\infty<\min_{k = 1, \ldots, p} \lambda_k.$

Suppose that $f$ is not bounded from below on $S.$
Then there exist $x_l\in S,$ $l=1,2,\ldots,$ such that $f(x_l)\to-\infty$ as $l\to\infty.$
We may assume that $\|x_l\|\to+\infty$ as $l\to\infty.$
Then
\begin{eqnarray*}
-\infty=\inf_{x\in S}f(x)=\lim_{l\to\infty}f(x_l)\geq \lim_{l\to\infty}\inf_{x\in S\cap \mathbb{S}_{\|x_l\|}}f(x)=\lim_{t\to\infty}\psi(t)\geq\min_{k = 1, \ldots, p} \lambda_k,
\end{eqnarray*}
where the last equality holds since $\lim_{t\to\infty}\psi(t)$ exists (when we include $\pm\infty$ as the limits) and the last inequality holds from Lemma~\ref{BD310} (iii).
Hence
$$\min_{k = 1, \ldots, p} \lambda_k  =  -\infty.$$
\end{proof}

In what follows we let
$$K :=\{k \ | \ f_k \textrm{ is not constant} \}.$$
By the growth dichotomy lemma (Lemma~\ref{GrowthDichotomyLemma}), we can assume that each function $f_k, k \in K,$ is developed into a fractional power series of the form
\begin{eqnarray*}
f_k(t) &=& a_k t^{\alpha_k} + \textrm{ lower order terms in } t \quad \textrm{ as } \quad  t \to +\infty,
\end{eqnarray*}
where $a_k \in \mathbb{R} \setminus \{0\}$ and $\alpha_k \in \mathbb{Q}.$

\begin{theorem} \label{Theorem42}
Assume that $\mathrm{(QC)}_{\infty}$ holds. Then $f$ is bounded from below on $S$ if and only if for any $k \in K,$
$$\alpha_k > 0 \quad \Longrightarrow \quad a_k > 0.$$
\end{theorem}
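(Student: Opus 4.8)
The plan is to deduce this characterization directly from Theorem~\ref{Theorem41}, which already asserts that, under $\mathrm{(QC)}_\infty$, the function $f$ is bounded from below on $S$ if and only if $\min_{k = 1, \ldots, p} \lambda_k > -\infty$. It therefore suffices to translate the condition $\min_k \lambda_k > -\infty$ into the stated condition on the leading exponents $\alpha_k$ and coefficients $a_k$, and the heart of the matter is to determine, for each $k$, precisely when $\lambda_k = \lim_{t \to +\infty} f_k(t)$ equals $-\infty$. First I would dispose of the indices $k \notin K$: there $f_k$ is constant and real-valued (its value is $f(x)$ for some $x \in \Gamma_k \cap \mathbb{S}_t$, and $\Gamma_k \subset \Gamma(f, S) \subset \mathrm{dom} f$), so $\lambda_k \in \mathbb{R}$ and in particular $\lambda_k > -\infty$. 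Hence the constant pieces never contribute $-\infty$ to the minimum, and only the indices $k \in K$ need to be examined.

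For $k \in K$ I would use the fractional power series expansion $f_k(t) = a_k t^{\alpha_k} + (\textrm{lower order terms})$ with $a_k \ne 0$, established above via the growth dichotomy lemma (Lemma~\ref{GrowthDichotomyLemma}), and carry out a short case analysis according to the sign of $\alpha_k$. If $\alpha_k < 0$, then the leading term and hence all lower order terms tend to $0$, so $\lambda_k = 0$; if $\alpha_k = 0$, then $f_k(t) \to a_k$, so $\lambda_k = a_k \in \mathbb{R}$; in either case $\lambda_k > -\infty$. If $\alpha_k > 0$, then $t^{\alpha_k} \to +\infty$ dominates the lower order terms, so $\lambda_k = +\infty$ when $a_k > 0$ and $\lambda_k = -\infty$ when $a_k < 0$. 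Since $a_k \ne 0$, this shows that, for $k \in K$,
\begin{equation*}
\lambda_k = -\infty \quad \Longleftrightarrow \quad \alpha_k > 0 \ \textrm{ and } \ a_k < 0.
\end{equation*}

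Combining the two observations, $\lambda_k = -\infty$ for some $k \in \{1, \ldots, p\}$ if and only if there is an index $k \in K$ with $\alpha_k > 0$ and $a_k < 0$. Consequently $\min_k \lambda_k > -\infty$ if and only if no such index exists, that is, if and only if for every $k \in K$ one has $\alpha_k > 0 \Rightarrow a_k > 0$ (using once more that $a_k \ne 0$, so that $a_k < 0$ is the exact negation of $a_k > 0$). Invoking Theorem~\ref{Theorem41} then yields the claimed equivalence. I do not expect a genuine obstacle here: the argument is essentially bookkeeping of the growth dichotomy expansion, and the only points requiring care are the separate treatment of the constant pieces $k \notin K$ and the observation that $a_k \ne 0$ makes the dichotomy between $a_k > 0$ and $a_k < 0$ exhaustive.
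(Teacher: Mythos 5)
Your proof is correct and follows essentially the same route as the paper: reduce to Theorem~\ref{Theorem41}, observe that the constant pieces ($k \notin K$) have finite $\lambda_k$, and read off the sign of $\lambda_k$ for $k \in K$ from the leading term of the growth-dichotomy expansion. The only (harmless) difference is that you justify finiteness of $\lambda_k$ for $k \notin K$ directly from $f_k$ being a real-valued constant, whereas the paper routes this through Lemma~\ref{BD39}(iii) and the finiteness of $f(\Sigma(f,S))$ from Lemma~\ref{BD32}.
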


\begin{proof}
In light of Lemma~\ref{BD32}, $f(\Sigma(f, S))$ is a finite subset of $\mathbb{R}.$ By Lemma~\ref{BD39}(iii), if $k \not \in K,$ then $f|_{\Gamma_k} \equiv \lambda_k$ and $\Gamma_k \subset \Sigma(f, S),$ which yield $\lambda_k \in f(\Sigma(f, S)),$ and so $\lambda_k$ is finite. Therefore, in view of Theorem~\ref{Theorem41}, $f$ is bounded from below on $S$ if and only if it holds that
$$\lambda_k = \lim_{t \to +\infty} f_k(t) >  -\infty \quad \textrm{ for all } \quad k \in K.$$
Then the desired conclusion follows immediately from the definition of $\alpha_k$ and $a_k.$
\end{proof}

\subsection{Optimal values}
The following result shows that to compute the optimal value of the problem~\eqref{Problem} it suffices to know the {\em finite} set $f(\Sigma(f, S))$ and the values $\lambda_k, k = 1, \ldots, p.$

\begin{theorem}\label{Theorem43}
Assume that $\mathrm{(QC)}$ holds. Then
\begin{eqnarray*}
\inf_{x \in S} f(x) & = & \min \left \{ \min_{x \in \Sigma(f, S)} f(x), \min_{k = 1, \ldots, p} \lambda_k \right \}.
\end{eqnarray*}
\end{theorem}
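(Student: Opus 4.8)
The plan is to prove the two inequalities separately, using the one–variable functions $\psi$ and $f_k$ together with the limit formula of Lemma~\ref{BD311} and the comparison of Lemma~\ref{BD312}. First I would note that $\mathrm{(QC)}$ implies $\mathrm{(QC)}_{\infty}$ (take any $R > 0$ in the definition), so that every result of Section~\ref{Section3} proved under $\mathrm{(QC)}_{\infty}$ is available. For brevity, write $m := \min\{\min_{x \in \Sigma(f,S)} f(x),\ \min_{k = 1, \ldots, p} \lambda_k\}$; this is well-defined in $\mathbb{R} \cup \{\pm\infty\}$ because $f(\Sigma(f,S))$ is a finite set by Lemma~\ref{BD32} (with the convention $\inf \emptyset = +\infty$ covering the case $\Sigma(f,S) = \emptyset$).

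For the inequality $\inf_{x \in S} f(x) \le m$, I would argue as follows. Since $\Sigma(f, S) \subset S$, one has $\inf_{x \in S} f(x) \le \min_{x \in \Sigma(f,S)} f(x)$ immediately. Moreover, combining Lemma~\ref{BD311} with Lemma~\ref{BD312} gives $\min_{k} \lambda_k = \lim_{t \to +\infty} \psi(t) \ge \inf_{x \in S} f(x)$, i.e.\ $\inf_{x \in S} f(x) \le \min_k \lambda_k$. Taking the minimum of the two upper bounds yields $\inf_{x \in S} f(x) \le m$.

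For the reverse inequality $\inf_{x \in S} f(x) \ge m$, I would split into two cases according to whether the infimum is attained. If $f$ does \emph{not} attain its infimum on $S$, then Lemma~\ref{BD312} gives the equality $\lim_{t \to +\infty} \psi(t) = \inf_{x \in S} f(x)$, and Lemma~\ref{BD311} identifies this limit as $\min_k \lambda_k \ge m$, which settles this case. If instead $f$ attains its infimum at some $x^* \in \mathrm{dom} f \cap S$, then $x^*$ is in particular a local minimizer of $f$ on $S$; since $\mathrm{(QC)}$ holds at $x^*$, the Fermat rule (Theorem~\ref{FermatRule}) applies and delivers $0 \in \partial f(x^*) + N(x^*; S)$, that is, $x^* \in \Sigma(f, S)$. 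Hence $\inf_{x \in S} f(x) = f(x^*) \ge \min_{x \in \Sigma(f,S)} f(x) \ge m$, completing the proof.

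The step that genuinely requires the full condition $\mathrm{(QC)}$, rather than only $\mathrm{(QC)}_{\infty}$, is the attained-infimum case: to invoke the Fermat rule at the minimizer $x^*$—which may lie in any compact region and need not be located near infinity—one needs the qualification condition to hold at that specific point, and $\mathrm{(QC)}$ is precisely what guarantees this at every point of $\mathrm{dom} f \cap S$. This is the main (and essentially the only) obstacle; the remaining steps are direct consequences of the one-variable reductions already established, so no new estimates are needed.
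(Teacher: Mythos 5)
Your proof is correct. Its skeleton matches the paper's: the inequality $\inf_{x \in S} f(x) \le \min_{x \in \Sigma(f,S)} f(x)$ comes from $\Sigma(f,S) \subset S$, and the reverse inequality is proved by splitting on whether the infimum is attained, with the Fermat rule (Theorem~\ref{FermatRule}) under $\mathrm{(QC)}$ applied at a global minimizer in the attained case --- exactly as in the paper. Where you genuinely diverge is in the unattained case and in the bound $\inf_{x \in S} f(x) \le \min_{k} \lambda_k$: the paper re-runs the construction explicitly, taking a minimizing sequence $x_k$ with $\|x_k\| \to +\infty$, choosing minimizers $y_k$ of $f$ on $S \cap \mathbb{S}_{\|x_k\|}$, showing $y_k \in \Gamma(f,S)$ via the Fermat rule together with Lemmas~\ref{BD34} and~\ref{BD36}, and passing to a subsequence contained in a single $\Gamma_\ell$ to get $\inf_{x \in S} f(x) = \lambda_\ell$; you instead invoke the prepackaged Lemmas~\ref{BD311} and~\ref{BD312}, which encapsulate precisely this sphere-minimizer argument through the function $\psi$. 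Your route is shorter and avoids duplicating work already done in Section~\ref{Section3}; the paper's route is self-contained within the theorem and has the minor advantage of exhibiting the optimal value as a specific $\lambda_\ell$. Two bookkeeping points you handle correctly that the paper leaves implicit: that $\mathrm{(QC)}$ implies $\mathrm{(QC)}_\infty$ (so the Section~\ref{Section3} lemmas apply), and that the convention $\inf \emptyset = +\infty$ covers the case $\Sigma(f,S) = \emptyset$ in the unattained branch.
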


\begin{proof}
We first assume that $f$ attains its infimum on $S,$ i.e., there exists a point $x^* \in S$ such that
\begin{eqnarray*}
f(x^*) &=& \inf_{x \in S} f(x).
\end{eqnarray*}
In light of Theorem~\ref{FermatRule}, $x^* \in \Sigma(f, S)$ and so
\begin{eqnarray*}
\inf_{x \in S} f(x) & \ge & \min_{x \in \Sigma(f, S)} f(x).
\end{eqnarray*}

We now assume that $f$ does not attain its infimum on $S.$  Then there exists a sequence $x_k \in \mathrm{dom} f \cap S$ such that
\begin{eqnarray*}
\lim_{k \to +\infty} \|x_k\| = +\infty  \quad & \textrm{ and } & \quad \lim_{k \to +\infty} f(x_k)  \ = \ \inf_{x \in S} f(x).
\end{eqnarray*}
Since the function $f$ is lower semi-continuous and the set $\{x \in \mathrm{dom} f \cap S \ | \ \|x\|^2 = \|x_k\|^2 \}$ is nonempty compact, the optimization problem
\begin{eqnarray*}
\textrm{minimize } \ f(x) \quad \textrm{ subject to } \quad x \in  S \  \textrm{ and } \  \|x\|^2 = \|x_k\|^2
\end{eqnarray*}
has at least one optimal solution, say, $y_k.$ Clearly, $y_k$ belongs to $\mathrm{dom} f \cap S$ and satisfies
\begin{eqnarray*}
\lim_{k \to +\infty} \|y_k\| = +\infty  \quad & \textrm{ and } & \quad \lim_{k \to +\infty} f(y_k)  \ = \ \inf_{x \in S} f(x).
\end{eqnarray*}
Furthermore, by Theorem~\ref{FermatRule} and Lemmas~\ref{BD34}, \ref{BD36}, we have for all $k$ large enough,
\begin{eqnarray*}
0 \ \in \ \partial f(y_k) + N(y_k; S \cap \mathbb{S}_{\|x_k\|}) &\subset& \partial f(y_k) + N(y_k; S) + N(y_k; \mathbb{S}_{\|x_k\|}) \\
&=& \partial f(y_k) + N(y_k; S) + \{\mu y_k \ | \ \mu \in \mathbb{R}\},
\end{eqnarray*}
and so $y_k \in \Gamma(f, S).$ Passing to a subsequence if necessary, we may assume that for all $k,$ $y_k \in \Gamma_{\ell}$ for some $\ell \in \{1, \ldots, p\}.$ Then
\begin{eqnarray*}
\inf_{x \in S} f(x) = \lim_{k \to +\infty} f(y_k)  = \lim_{k \to +\infty}  f_{\ell}(\|y_k\|) = \lambda_{\ell}.
\end{eqnarray*}

Therefore, in both cases, we have
\begin{eqnarray*}
\inf_{x \in S} f(x) \geq \min \left \{ \min_{x \in \Sigma(f, S)} f(x), \min_{k = 1, \ldots, p} \lambda_k \right \}.
\end{eqnarray*}

Since $\Sigma(f,S)\subset S,$
\begin{eqnarray*}
\inf_{x \in S} f(x)\leq \min_{x \in \Sigma(f, S)} f(x).
\end{eqnarray*}
Since $\Gamma_k\cap \mathbb{S}_t\subset S,$ $k=1,\ldots,p,$
\begin{eqnarray*}
\inf_{x \in S} f(x)\leq f_k(t), \ \ k=1,\ldots,p,
\end{eqnarray*}
and hence,
\begin{eqnarray*}
\inf_{x \in S} f(x)\leq \min_{k=1,\ldots,p}f_k(t).
\end{eqnarray*}
Thus,
\begin{eqnarray*}
\inf_{x \in S} f(x) \leq \min \left \{ \min_{x \in \Sigma(f, S)} f(x), \min_{k = 1, \ldots, p} \lambda_k \right \}.
\end{eqnarray*}

\end{proof}

\subsection{Existence of optimal solutions}

In this subsection we provide necessary and sufficient conditions for the existence of optimal solutions to the problem~\eqref{Problem}.
We start with the following result.

\begin{theorem} \label{Theorem44}
Assume that $\mathrm{(QC)}$ holds. Then $f$ attains its infimum on $S$ if and only if it holds that
\begin{eqnarray*}
\Sigma(f, S) \ne \emptyset \quad \textrm{ and } \quad \min_{x \in \Sigma(f, S) } f(x) & \le & \min_{k \in K} \lambda_k.
\end{eqnarray*}
\end{theorem}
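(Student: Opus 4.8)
The plan is to reduce the existence question to a comparison between the optimal value computed in Theorem~\ref{Theorem43} and the value attained on the critical set $\Sigma(f,S)$. The key structural fact supplied by Theorem~\ref{Theorem43} (which applies since $\mathrm{(QC)}$ holds, hence $\mathrm{(QC)}_\infty$ holds as well) is the formula
\[
\inf_{x \in S} f(x) = \min\left\{ \min_{x \in \Sigma(f,S)} f(x),\ \min_{k = 1,\ldots,p} \lambda_k \right\}.
\]
First I would observe that the minimum over all $k \in \{1,\ldots,p\}$ can be replaced by the minimum over $k \in K$ (the non-constant indices). Indeed, by Lemma~\ref{BD39}(iii), if $k \notin K$ then $f_k$ is constant with value $\lambda_k$ and $\Gamma_k \subset \Sigma(f,S)$, so $\lambda_k \in f(\Sigma(f,S))$ and already appears in the term $\min_{x \in \Sigma(f,S)} f(x)$. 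Hence
\[
\inf_{x \in S} f(x) = \min\left\{ \min_{x \in \Sigma(f,S)} f(x),\ \min_{k \in K} \lambda_k \right\}.
\]

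\emph{Necessity.} Suppose $f$ attains its infimum on $S$ at some point $x^*$. By the Fermat rule (Theorem~\ref{FermatRule}), whose qualification hypothesis is exactly $\mathrm{(QC)}$, we get $0 \in \partial f(x^*) + N(x^*; S)$, so $x^* \in \Sigma(f,S)$; in particular $\Sigma(f,S) \ne \emptyset$ and $\inf_{x \in S} f(x) = \min_{x \in \Sigma(f,S)} f(x)$. Feeding this into the displayed formula forces $\min_{x \in \Sigma(f,S)} f(x) \le \min_{k \in K} \lambda_k$, which is precisely the asserted condition.

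\emph{Sufficiency.} Conversely, assume $\Sigma(f,S) \ne \emptyset$ and $\min_{x \in \Sigma(f,S)} f(x) \le \min_{k \in K} \lambda_k$. Then the minimum in the displayed formula is realized by the first term, so
\[
\inf_{x \in S} f(x) = \min_{x \in \Sigma(f,S)} f(x).
\]
Since $f(\Sigma(f,S))$ is a finite subset of $\mathbb{R}$ by Lemma~\ref{BD32}, the value $\min_{x \in \Sigma(f,S)} f(x)$ is a genuine finite minimum attained at some point of $\Sigma(f,S) \subset S$. That point realizes $\inf_{x \in S} f(x)$, so $f$ attains its infimum on $S$.

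\textbf{The main obstacle} I anticipate is the bookkeeping in replacing $\min_{k=1,\ldots,p}$ by $\min_{k \in K}$, and more subtly the interplay with infinite values: one must check that dropping the constant indices $k \notin K$ does not discard a smaller $\lambda_k$ and that the inequality $\min_{x \in \Sigma(f,S)} f(x) \le \min_{k \in K} \lambda_k$ is the right way to guarantee the overall minimum lands on the (always finite, by Lemma~\ref{BD32}) critical-value term rather than on a possibly $-\infty$ tangency limit. Because $f(\Sigma(f,S))$ is finite, the quantity $\min_{x \in \Sigma(f,S)} f(x)$ is never $-\infty$ when $\Sigma(f,S) \ne \emptyset$, which is what makes attainment equivalent to this term dominating; keeping the convention $\min_{k \in K} \lambda_k = +\infty$ when $K = \emptyset$ handles the degenerate case cleanly.
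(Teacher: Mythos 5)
Your proof is correct and follows essentially the same route as the paper's: both rest on the optimal-value formula of Theorem~\ref{Theorem43}, the Fermat rule (Theorem~\ref{FermatRule}) for necessity, Lemma~\ref{BD39}(iii) to absorb the constant indices $k \notin K$, and the finiteness of $f(\Sigma(f,S))$ (Lemma~\ref{BD32}) to pass from the value formula to actual attainment. The only cosmetic difference is that you fold the constant indices into the formula up front and conclude attainment directly from finiteness, skipping the paper's intermediate appeal to Theorem~\ref{Theorem41} on boundedness from below, which is indeed dispensable.
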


\begin{proof}
Note that $f(\Sigma(f, S))$ is a finite subset of $\mathbb{R}$ (see Lemma~\ref{BD32}).

{\em Necessity.}  Let $f$ attain its infimum on $S,$ i.e., there exists a point $x^* \in S$ such that
\begin{eqnarray*}
f(x^*) &=& \inf_{x \in S} f(x).
\end{eqnarray*}
In light of Theorem~\ref{FermatRule}, $x^* \in \Sigma(f, S)$ and so $\Sigma(f, S)$ is nonempty. Moreover, we have
\begin{eqnarray*}
\min_{x \in \Sigma(f, S) } f(x) & \le &
f(x^*) \ = \ \inf_{x \in S} f(x) \ \le \
\min_{k = 1, \ldots, p} \lambda_k \ \le \ \min_{k \in K} \lambda_k,
\end{eqnarray*}
where the second inequality follows from Theorem~\ref{Theorem43}.

{\em Sufficiency.}  By the assumption, we have
\begin{eqnarray*}
- \infty & < & \min_{x \in \Sigma(f, S)} f(x) \ \le \ \min_{k \in K} \lambda_k.
\end{eqnarray*}
On the other hand, it is clear from Lemma~\ref{BD39}(iii) that $\lambda_k \in f(\Sigma(f, S))$ for all $k \not \in K$ and so
\begin{eqnarray*}
\min_{\lambda \in f(\Sigma(f, S))} \lambda & \le & \min_{k \not \in K} \lambda_k.
\end{eqnarray*}
Therefore,
\begin{eqnarray*}
- \infty & < & \min_{x \in \Sigma(f, S)} f(x) \ = \  \min_{\lambda \in f(\Sigma(f, S))} \lambda  \ \le \ \min_{k = 1, \ldots, p} \lambda_k,
\end{eqnarray*}
which, together with Theorem~\ref{Theorem41}, yields that $f$ is bounded from below on $S.$ Moreover, by Theorem~\ref{Theorem41}, we have
\begin{eqnarray*}
\min_{x \in \Sigma(f, S) } f(x) & = & \inf_{x \in S} f(x),
\end{eqnarray*}
which implies that $f$ attains its infimum on $S.$
\end{proof}

\begin{theorem}\label{Theorem45}
Assume that $\mathrm{(QC)}$ holds. Then the set of all optimal solutions of the problem~\eqref{Problem} is nonempty compact if and only if it holds that
\begin{eqnarray*}
\Sigma(f, S) \ne \emptyset, \quad \min_{x \in \Sigma(f, S) } f(x) & \le & \min_{k \in K} \lambda_k,
\quad \textrm{ and } \quad \min_{x \in \Sigma(f, S) } f(x) \  < \  \min_{k \not \in K} \lambda_k.
\end{eqnarray*}
\end{theorem}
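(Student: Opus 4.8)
The plan is to separate the two parts of ``nonempty compact.'' The first two displayed conditions are exactly the criterion of Theorem~\ref{Theorem44} for $f$ to attain its infimum on $S$, so they are equivalent to the nonemptiness of the solution set $\mathrm{Sol} := \{x \in S \mid f(x) = \inf_{y\in S} f(y)\}$. Since $f$ is lower semi-continuous and $S$ is closed, whenever the infimum $m := \inf_{x\in S} f(x)$ is attained we have $\mathrm{Sol} = \{x\in S \mid f(x) \le m\}$, which is closed; hence, under the first two conditions, $\mathrm{Sol}$ is compact if and only if it is bounded. Thus the theorem reduces to proving that, assuming $f$ attains its infimum (so that by Theorems~\ref{Theorem44} and~\ref{Theorem43} we have $m = \min_{x\in\Sigma(f,S)} f(x)$), the set $\mathrm{Sol}$ is bounded if and only if $m < \min_{k\notin K}\lambda_k$.

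First I would record the easy inequality $\lambda_k \ge m$ for every $k \notin K$: by Lemma~\ref{BD39}(iii), $k\notin K$ forces $\Gamma_k \subset \Sigma(f,S)$ and $f \equiv \lambda_k$ on $\Gamma_k$, so $\lambda_k \in f(\Sigma(f,S))$ and therefore $\lambda_k \ge m$. Consequently the strict inequality $m < \min_{k\notin K}\lambda_k$ fails precisely when there is an index $k\notin K$ with $\lambda_k = m$. So it suffices to prove the key equivalence: $\mathrm{Sol}$ is unbounded if and only if there is some $k\notin K$ with $\lambda_k = m$.

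The direction I expect to be routine is that this criterion is sufficient for unboundedness: if $k\notin K$ and $\lambda_k = m$, then $f$ is identically $\lambda_k = m$ on the unbounded connected set $\Gamma_k \subset S$ (again by Lemma~\ref{BD39}(iii)), so every point of $\Gamma_k$ lies in $\mathrm{Sol}$, whence $\mathrm{Sol}$ is unbounded. The main obstacle is the converse. Here I would start from a sequence $x_l \in \mathrm{Sol}$ with $\|x_l\| \to +\infty$, which exists because $\mathrm{Sol}$ is unbounded and which, after passing to a subsequence, may be taken with $\|x_l\|$ strictly increasing. Each $x_l$ is a global, hence local, minimizer of $f$ on $S$; since $\mathrm{(QC)}$ holds, the Fermat rule (Theorem~\ref{FermatRule}) gives $0 \in \partial f(x_l) + N(x_l; S)$, so $x_l \in \Sigma(f,S) \subset \Gamma(f,S)$. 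For $l$ large, $\|x_l\| > R$, hence $x_l \in \Gamma(f,S)\setminus\mathbb{B}_R = \cup_{k=1}^p \Gamma_k$; by the pigeonhole principle and a further subsequence I may assume all $x_l$ lie in a single $\Gamma_k$. Then $f_k(\|x_l\|) = f(x_l) = m$ along the strictly increasing sequence of norms $\|x_l\| \to +\infty$. By Lemma~\ref{BD39}(ii), $f_k$ is constant or strictly monotone; a strictly monotone function cannot take a fixed value at infinitely many distinct arguments, so $f_k$ must be constant, i.e. $k\notin K$, and $\lambda_k = \lim_{t\to+\infty} f_k(t) = m$. This establishes the key equivalence.

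Finally I would assemble the two parts. For necessity in the theorem, if $\mathrm{Sol}$ is nonempty compact, then nonemptiness yields the first two conditions by Theorem~\ref{Theorem44}, while boundedness together with the key equivalence rules out any $k\notin K$ with $\lambda_k = m$, which by the second paragraph is exactly $m < \min_{k\notin K}\lambda_k$. For sufficiency, if all three conditions hold, the first two give (Theorem~\ref{Theorem44}) that $\mathrm{Sol}$ is nonempty with $m = \min_{x\in\Sigma(f,S)}f(x)$, and the third, via the key equivalence, forces $\mathrm{Sol}$ to be bounded, hence compact. The only point needing a little care is the degenerate case $K = \{1,\ldots,p\}$, where $\min_{k\notin K}\lambda_k = +\infty$ under the convention $\min\emptyset = +\infty$; then the third condition holds automatically and the key equivalence (no admissible $k$) shows $\mathrm{Sol}$ is automatically bounded, in agreement with the statement.
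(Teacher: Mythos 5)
Your proposal is correct, and its global skeleton agrees with the paper's: nonemptiness of the solution set is reduced to Theorem~\ref{Theorem44}, its closedness comes from lower semi-continuity of $f$ and closedness of $S$, and, writing $m:=\inf_{x\in S}f(x)$, everything hinges on the equivalence that (given attainment) the solution set is unbounded exactly when $\lambda_k=m$ for some $k\notin K$; the easy direction of this equivalence (such a $k$ yields the unbounded set $\Gamma_k$ of minimizers) is argued the same way in both texts. The genuine difference is in the hard direction. The paper works topologically: the set of optimal solutions outside $\mathbb{B}_R$ is semi-algebraic and unbounded, hence contains an unbounded connected component $X$ (Lemma~\ref{pathconnectedness}(i)); from $X\subset\Sigma(f,S)\subset\Gamma(f,S)$ and connectedness it places $X$ inside a single $\Gamma_k$, and then Corollary~\ref{Corollary210} gives $X\cap\mathbb{S}_t\neq\emptyset$ for all large $t$, so that $f_k\equiv m$ on all of $(R,+\infty)$. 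You work with a sequence instead: optimal solutions $x_l$ with strictly increasing norms lie in $\Sigma(f,S)\subset\Gamma(f,S)$ by the Fermat rule under $\mathrm{(QC)}$, a pigeonhole argument puts infinitely many of them in one $\Gamma_k$, and the constant-or-strictly-monotone dichotomy of Lemma~\ref{BD39}(ii) finishes, since a strictly monotone function is injective and $f_k$ takes the value $m$ at infinitely many arguments. Your route is more elementary and slightly more robust at the delicate point: it never requires a connected set of minimizers to stay inside one $\Gamma_k$ (the $\Gamma_k$ are pairwise disjoint, but they arise as components of the pieces $C_i\setminus\mathbb{B}_R$ rather than of $\Gamma(f,S)\setminus\mathbb{B}_R$ itself, so the paper's containment $X\subset\Gamma_k$ deserves a word of justification), and it needs $f_k=m$ only on an infinite set rather than on all of $(R,+\infty)$. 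Your explicit treatment of the degenerate case $K=\{1,\ldots,p\}$ via the convention $\min\emptyset=+\infty$ is likewise a point the paper leaves implicit.
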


\begin{proof}
Recall that, by Theorem~\ref{FermatRule}, if the problem~\eqref{Problem} has an optimal solution, then
\begin{eqnarray*}
 \min_{x \in \Sigma(f, S) } f(x) &=&  \inf_{x \in S} f(x).
\end{eqnarray*}

{\em Necessity.}
Take any $k \not \in K.$ Then $f|_{\Gamma_k} \equiv \lambda_k.$
It is clear that $ \inf_{x \in S} f(x) \leq \lambda_k.$
If $ \inf_{x \in S} f(x) = \lambda_k,$ then the set of all optimal solutions of the problem~\eqref{Problem} is unbounded.
Since $f|_{\Gamma_k}=\lambda_k$ and $\Gamma_k$ is unbounded,
\begin{eqnarray*}
\inf_{x \in S} f(x) < \lambda_k,
\end{eqnarray*}
which is contradicts the assumption.
Since $\Gamma_k$ is unbounded, our assumption implies that
\begin{eqnarray*}
\min_{x \in \Sigma(f, S) } f(x) &  = &  \inf_{x \in S} f(x) \ < \ \lambda_k,
\end{eqnarray*}
which, together with Theorem~\ref{Theorem44}, yields the desired conclusion.

{\em Sufficiency.} The function $f$ is lower semi-continuous. Hence, in view of Theorem~\ref{Theorem44}, it suffices to show that the set of all optimal solutions of the problem~\eqref{Problem} is bounded. Suppose to the contrary that the semi-algebraic set
$$\{x \in S \setminus \mathbb{B}_R \ | \ f(x) =  \inf_{x \in S} f(x)\}$$
is unbounded. By Lemma~\ref{pathconnectedness}(i), this set must contain an unbounded (semi-algebraic) connected component, say, $X.$
Observe that
$$X \subset \Sigma(f, S)\subset \Gamma(f, S).$$
Therefore, $X \subset \Gamma_k$ for some $k \in \{1, \ldots, p\}.$ Thanks to Corollary~\ref{Corollary210}, for all $t$ large enough, the set $X \cap \mathbb{S}_t$ is nonempty and so
\begin{eqnarray*}
f_k(t) &=&  f|_{\Gamma_k \cap \mathbb{S}_t} \ = \  f|_{X \cap \mathbb{S}_t}.
\end{eqnarray*}
Consequently, $f_k$ is constant $\inf_{x \in S} f(x),$ which yields $k \not \in K$ and $\lambda_k = \inf_{x \in S} f(x),$
which contradicts the assumption that
\begin{eqnarray*}
\inf_{x \in S} f(x) \ = \ \min_{x \in \Sigma(f, S)} f(x) &  < &  \min_{k\neq K}\lambda_k.
\end{eqnarray*}
The theorem is proved.
\end{proof}

\subsection{Coercivity}

The function $f$ is {\em coercive on} the set $S$ if for every sequence $x_k \in S$ such that $\|x_k\| \to +\infty,$ we have $f(x_k) \to +\infty.$ It is well known that if $f$ is coercive on $S,$ then $f$ achieves its infimum on $S.$ A necessary and sufficient condition for the coercivity of $f$ on $S$ is as follows.

\begin{theorem}\label{Theorem46}
Assume that $\mathrm{(QC)}_{\infty}$ holds. Then the following statements are equivalent:
\begin{enumerate}
\item[{\rm (i)}] The function $f$ is coercive on $S.$
\item[{\rm (ii)}] $\lambda_k = + \infty$ for all $k = 1, \ldots, p.$
\end{enumerate}
\end{theorem}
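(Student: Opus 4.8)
The plan is to establish the two implications separately, relying on the single-variable functions $f_k$ and $\psi$ constructed in Section~\ref{Section3} and, crucially, on the identity $\lim_{t \to +\infty} \psi(t) = \min_{k = 1, \ldots, p} \lambda_k$ supplied by Lemma~\ref{BD311}. The elementary but decisive observation throughout is that for every $x \in S$ with $\|x\| > R$ one has $f(x) \ge \psi(\|x\|)$, because $x$ lies in the nonempty compact slice $S \cap \mathbb{S}_{\|x\|}$ on which $\psi(\|x\|)$ realizes the minimum of $f$; this inequality holds even when $x \notin \mathrm{dom} f$, since then $f(x) = +\infty$.

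For the implication $\mathrm{(ii)} \Rightarrow \mathrm{(i)}$, I would assume $\lambda_k = +\infty$ for every $k$, so that $\min_{k = 1, \ldots, p} \lambda_k = +\infty$ and hence $\lim_{t \to +\infty} \psi(t) = +\infty$ by Lemma~\ref{BD311}. Given any sequence $x_l \in S$ with $\|x_l\| \to +\infty$, the displayed inequality gives $f(x_l) \ge \psi(\|x_l\|)$ for all $l$ large enough (so that $\|x_l\| > R$); letting $l \to +\infty$ forces $f(x_l) \to +\infty$. This is exactly coercivity of $f$ on $S$.

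For the converse $\mathrm{(i)} \Rightarrow \mathrm{(ii)}$, I would argue by contraposition: suppose some $\lambda_{k_0} \ne +\infty$, that is, $\lambda_{k_0} \in \mathbb{R} \cup \{-\infty\}$. Choose a sequence $t_l \to +\infty$ with $t_l > R$; by construction $\Gamma_{k_0} \cap \mathbb{S}_{t_l} \ne \emptyset$ (Corollary~\ref{Corollary210}), so pick $x_l \in \Gamma_{k_0} \cap \mathbb{S}_{t_l} \subset S$. Then $\|x_l\| = t_l \to +\infty$, while $f(x_l) = f_{k_0}(t_l) \to \lambda_{k_0} \ne +\infty$. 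Such a sequence violates coercivity, so $f$ is not coercive, which completes the contrapositive.

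Both directions are short because all the hard work has been front-loaded into the structural Lemmas~\ref{BD39}, \ref{BD310} and \ref{BD311}: the reduction of the behaviour of $f$ at infinity to the finitely many monotone profiles $f_k$, and the identification of $\lim_{t \to +\infty} \psi(t)$ with $\min_{k = 1, \ldots, p} \lambda_k$. The only point requiring mild care is to invoke the inequality $f(x) \ge \psi(\|x\|)$ solely for $\|x\| > R$ (where $\psi$ is defined) and to recall that coercivity is a tail condition, so the finitely many terms with $\|x_l\| \le R$ are irrelevant. I do not expect a genuine obstacle here; the essential \emph{idea}, already encapsulated upstream, is that $\psi$ is a lower envelope for $f$ on spheres whose limit is governed by the $\lambda_k$.
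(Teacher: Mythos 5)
Your proof is correct and takes essentially the same route as the paper: the paper's (one-line) argument observes that coercivity of $f$ on $S$ is equivalent to $\lim_{t \to +\infty} \psi(t) = +\infty$ and then invokes Lemma~\ref{BD311}, and your two implications simply spell out the details of that equivalence --- the inequality $f(x) \ge \psi(\|x\|)$ for one direction, and sampling points from a slice $\Gamma_{k_0} \cap \mathbb{S}_{t}$ for the contrapositive of the other. No gaps; you have merely made explicit what the paper leaves implicit.
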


\begin{proof}
Recall that $\psi(t) := \min_{x \in S \cap \mathbb{S}_t} f(x)$ for $t > R.$ Hence, $f$ is coercive on $S$ if and only if  $\lim_{t \to +\infty} \psi(t) = +\infty,$ or equivalently, $\min_{k = 1, \ldots, p} \lambda_k = + \infty$ in view of Lemma~\ref{BD311}.
\end{proof}

\section{Examples} \label{Section6}

In this section we give examples to illustrate our main results.

\begin{example}{\rm
Let $S := \mathbb{R}^2$ and  $f(x,y):=x^2+|y|.$ A direct calculation shows that $N((x,y); \mathbb{R}^2) = \{(0,0)\},$
$\partial^{\infty} f(x, y) = \{(0, 0)\}$ (as $f$ is locally Lipschitz) and that
\begin{eqnarray*}
\partial f(x, y) = \left\{
                    \begin{array}{ll}
                      \{(2x, \xi) \ | \ \xi \in[-1,1]\} & \text{if } y=0, \\
                      \{(2x,1) \} & \text{if } y>0, \\
                      \{(2x,-1) \} & \text{if } y<0.
                    \end{array}\right.
\end{eqnarray*}
It follows that
$\Sigma(f, \mathbb{R}^2) = \{(0, 0)\}$ and
\begin{eqnarray*}
\Gamma(f, \mathbb{R}^2) = [\R\times\{0\}] \cup [\{0\}\times \R\setminus\{0\}] \cup \{ (x,y) \ | \ x\in\R, \ y=\pm\frac{1}{2}\}.
\end{eqnarray*}
Hence, for $R > \frac{1}{2},$ the set $\Gamma(f, \mathbb{R}^2) \setminus \mathbb{B}_R$ has eight connected components:
\begin{eqnarray*}
\Gamma_{\pm 1} &:=& \left\{ (\pm t, 0) \ | \ t > R \right\}, \\
\Gamma_{\pm 2} &:=& \left\{ (0, \pm t) \ | \ t > R \right\}, \\
\Gamma_{\pm 3} &:=& \left\{ \left(t, \frac{1}{2}\right) \ | \ t > \sqrt{R-\frac{1}{4}} \right\}, \\
\Gamma_{\pm 4} &:=& \left\{ \left(t, -\frac{1}{2}\right) \ | \ t > \sqrt{R^2-\frac{1}{4}} \right\}.
\end{eqnarray*}
Consequently, the restriction of $f$ on these components are given by
\begin{eqnarray*}
f|_{\Gamma_{\pm 1}} &=& t^2,\quad f|_{\Gamma_{\pm 2}} \ = \ t,\\
f|_{\Gamma_{\pm 3}} &=& f|_{\Gamma_{\pm 4}} \ = \  t^2+\frac{1}{4}.
\end{eqnarray*}
Thus
\begin{eqnarray*}
\lambda_{\pm 1} &=& \lambda_{\pm  2}  = \ \lambda_{\pm  3}  \ = \ \lambda_{\pm  4}  \ = \ +\infty.
\end{eqnarray*}
The results presented in the previous section show that the set of global minimizers of $f$ on $S$ is nonempty compact and that
\begin{eqnarray*}
\inf_{(x, y) \in \mathbb{R}^2} f(x, y) &=& \min_{(x, y) \in \Sigma(f, S)} f(x, y)  \ = \ f(0, 0) \ = \ 0.
\end{eqnarray*}
Furthermore, in light of Theorem~\ref{Theorem46}, $f$ is coercive.
}\end{example}

\begin{example}{\rm
Let $S := \mathbb{R}^2$ and  $f(x,y):=x+y.$ Then, by simple calculations, we have
\begin{eqnarray*}
\Gamma(f, \mathbb{R}^2) = \{ (x,y) \ | \ x=y\}.
\end{eqnarray*}
For $R>0,$ let $\Gamma_1:=\{(t,t) \ | \ t\geq R\}$ and let $\Gamma_2:=\{(-t,-t) \ | \ t\geq R\}.$
Then we see that the restriction of $f$ on these components are given by
\begin{eqnarray*}
f|_{\Gamma_{1}} = \sqrt{2t}, \ \ f|_{\Gamma_{2}}   -\sqrt{2t}.
\end{eqnarray*}
So, we have
\begin{eqnarray*}
\lambda_{1} =\lim_{t\to\infty}f|_{\Gamma_{1}}=+\infty, \ \ \lambda_{2} =\lim_{t\to\infty}f|_{\Gamma_{2}}=-\infty.
\end{eqnarray*}
and thus, by Theorem~\ref{Theorem41}, $f$ is not bounded from below on $S.$
}\end{example}

\begin{example}{\rm
Let $S := \mathbb{R}^2$  and $f(x, y) := (xy - 1)^2 + |y|.$ We have $N((x,y); \mathbb{R}^2) = \{(0,0)\},$
$\partial^{\infty} f(x, y) = \{(0, 0)\}$ (as $f$ is locally Lipschitz) and
\begin{eqnarray*}
\partial f(x,y) = \left\{
                    \begin{array}{ll}
                      \{(0,-2x+\xi) \ | \ \xi\in[-1,1]\} & \text{if } y=0, \\
                      \{(2(xy-1)y,2(xy-1)x+1) \}, & \text{if } y>0, \\
                      \{(2(xy-1)y,2(xy-1)x-1) \}, & \text{if } y<0.
                    \end{array}\right.
\end{eqnarray*}
It follows that
$\Sigma(f, \mathbb{R}^2) = [-\frac{1}{2}, \frac{1}{2}] \times \{0\}$ and
$$\Gamma(f, \mathbb{R}^2) = \Sigma(f, \mathbb{R}^2) \cup \{(x, y) \ | \ g_+(x, y) = 0, y > 0\} \cup \{(x, y) \ | \ g_-(x, y) = 0, y < 0\},$$
where $g_{\pm} (x, y) := -2\,{x}^{3}y+2\,x{y}^{3} \mp  x+2\,{x}^{2}-2\,{y}^{2}.$ Then we can see that\footnote{The computations are performed with the software Maple, using the command ``puiseux'' of the package ``algcurves'' for the rational Puiseux expansions.} for $R$ large enough, the set $\Gamma(f, \mathbb{R}^2) \setminus \mathbb{B}_R$ has eight connected components:
$$\begin{array}{llll}
\Gamma_{\sigma, 1}: &&
x := (-{t}^{-1}- \frac{1}{2}\,\sigma\,{t}^{2}+O \left({t}^{4} \right) ), \quad&
y := (-{t}^{-1}- \frac{1}{4}\,\sigma\,{t}^{2}+O \left( {t}^{4} \right) ),  \\
\Gamma_{\sigma, 2}: &&
x := ({\frac {1}{3}}{t}^{-1}+ \frac{3}{2}\,\sigma\,{t}^{2}+O \left( {t}^{4} \right)), \quad
& y := (-{\frac {1}{3}}{t}^{-1}+ \frac{3}{4}\,\sigma\,{t}^{2}+O \left( {t}^{4}
 \right) ), \\
\Gamma_{\sigma, 3}: &&
x := (-2\,t +4\,{t}^{3}+O \left( {t}^{4} \right) ), \quad
& y := (-{\frac {1}{2}}{t}^{-1}- t +2\,{t}^{3}+O \left( {t}^{4} \right) ), \\
\Gamma_{\sigma, 4}: &&
x := ({t}^{-1}+2\,t -\sigma\,{t}^{2}-4\,{t}^{3}+O \left( {t}^{4} \right) ), \quad
& y := (t - \frac{1}{2}\,\sigma\,{t}^{2}-2\,{t}^{3}+3\,\sigma\,{t}^{4}), \\
\end{array}$$
where $\sigma = \pm 1$ and $t \to \mp 0$ for $k = 1, 2, 3,$ and $t \to \pm 0$ for $k = 4.$ Then substituting these expansions in $f$ we get
$$\begin{array}{llll}
f|_{\Gamma_{\sigma, 1}} &=& ({t}^{-4}-2\,{t}^{-2} +  \frac{1}{2}\,\sigma\,{t}^{-1}+1+O \left({t} \right) ), \\
f|_{\Gamma_{\sigma, 2}} &=& ({\frac {1}{81}}{t}^{-4}+{\frac {2}{9}}{t}^{-2}-{\frac {5}{18}}\,
\sigma\,{t}^{-1}+1+O \left( t \right) ), \\
f|_{\Gamma_{\sigma, 3}} &=& (-\frac{1}{2}\,\sigma\,{t}^{-1}-\sigma\,t +2\,\sigma\,{t}^{3}+O \left( {t}^{4}
 \right) ), \\
f|_{\Gamma_{\sigma, 4}} &=& (\sigma\,t - \frac{1}{4}\,{\sigma}^{2}{t}^{2}-2\,\sigma\,{t}^{3}+2\,{\sigma}^{2}
{t}^{4}+O \left( {t}^{5} \right) ).
\end{array}$$
It follows that
\begin{eqnarray*}
\lambda_{\sigma, 1} &=& \lambda_{\sigma, 2}  = \ \lambda_{\sigma, 3}  \ = \ +\infty, \quad \lambda_{\sigma, 4}  \ = \ 0.
\end{eqnarray*}
In light of Theorem~\ref{Theorem41}, $f$ is bounded from below. Note that
\begin{eqnarray*}
f|_{\Sigma(f, \mathbb{R}^2)}  & \equiv & 1 \ > \ 0 \ = \ \min_{k = 1, \ldots, 4} \lambda_{\sigma, k}.
\end{eqnarray*}
Hence, by Theorem~\ref{Theorem44}, $f$ does not attain its infimum. In view of Theorem~\ref{Theorem43}, we have
\begin{eqnarray*}
\inf_{(x, y) \in \mathbb{R}^2} f(x, y) &=& 0.
\end{eqnarray*}
Furthermore, by Theorem~\ref{Theorem46}, $f$ is not coercive.
}\end{example}

\begin{example}\label{ex54}{\rm
Let $f(x,y):=\min\{x+y,1\}$  and let $S:=\R^2_+.$
Then the function $f$ is semi-algebraic and $\mathrm{(QC)}$ holds.
Note that the function $f$ is continuous and concave.
Then it follows from \cite[Proposition~7]{Ioffe1984} that we have
\begin{eqnarray*}
\partial f(x,y) =   \left\{
                    \begin{array}{ll}
                      \{(0,0)\} & \text{if } x+y>1, \\
                     \{(0,0), (1,1)\}, & \text{if } x+y=1, \\
                      \{(1,1)\}, & \text{if } x+y<1.
                    \end{array}\right.
\end{eqnarray*}
Moreover, by a simple calculation, we see that
\begin{eqnarray*}
N((x,y);S) =  \left\{
                    \begin{array}{ll}
                     -\R^2_+ & \text{if } (x,y)=(0,0), \\
                     \{0\}\times(-\R_+), & \text{if } x>0, \ y=0, \\
                      -\R_+\times\{0\}, & \text{if } x=0, \ y>0, \\
                      \{(0,0)\}, & \text{if } x>0, \ y>0,
                    \end{array}\right.
\end{eqnarray*}
and so,
\begin{eqnarray*}
\Gamma(f, S) &=&  \{ (x,y)\in S \ | \ x=y, \ x+y<1\}\cup\{ (x,y)\in S \ | \  \ x+y\geq1\}\\
&&\cup \{ (x,y)\in S \ | \  \ x=0, \  y>0\}\cup \{ (x,y)\in S \ | \  \ x>0, \  y=0\}\cup \{(0,0)\}.
\end{eqnarray*}
Note that
\begin{eqnarray*}
\Sigma(f,S)=  \{ (x,y)\in S \ | \  \ x+y\geq1\}\cup \{(0,0)\}
\end{eqnarray*}

Now, for $R>1,$ let $\Gamma_1:=\Gamma(f,S)\setminus \mathbb{B}_{R}.$
Then we have $f|_{\Gamma_1}\equiv1,$ and so $\lambda_1=\lim_{t\to\infty}f|_{\Gamma_1}=1.$
Thus,
\begin{eqnarray*}
\min_{(x,y) \in \Sigma(f, S) } f(x,y)=0<1=\lambda_1.
\end{eqnarray*}
By Theorem~\ref{Theorem45}, the set of all optimal solutions of $f$ on $S$ is nonempty compact, that is $\{(0,0)\}.$
Moreover, it follows from Theorem~\ref{Theorem46} that the function $f$ is not coercive on $S.$
}\end{example}

\begin{example}{\rm
Let $S:=\R^2_+.$ Consider the following function from $\R^2$ to $\R:$
\begin{eqnarray*}
f(x,y) =  \left\{
                    \begin{array}{ll}
                      0, & \text{if } (x,y)\in A, \\
                      1, & \text{if } (x,y)\notin A,
                    \end{array}\right.
\end{eqnarray*}
where $A:=\{(x,y) \ | \ x\in\R, \ y\geq0\}.$
Note that the function $f$ is lower semi-continuous and semi-algebraic, but not local Lipschitz.
Note also that Normal cone to $S$ at $x$ is same in Example~\ref{ex54}.
So, by a direct calculation, we see that
\begin{eqnarray*}
\partial f(x,y) =  \left\{
                    \begin{array}{ll}
                     \{0\}\times(-\R_+)& \text{if } x\geq0, \ y=0, \\
                     \{(0,0)\}, & \text{if } x\geq0, \ y>0.
                    \end{array}\right.
\end{eqnarray*}
and $\Gamma(f,S)=\R_+^2.$
Let $\Gamma_1:=\Gamma(f,S)\setminus \mathbb{B}_{R}$ for $R>0.$
Then we have $f|_{\Gamma_1}\equiv0,$ and so, $\lambda_1=\lim_{t\to\infty}f|_{\Gamma_1}=0.$
Note that $\Sigma(f,S)=\R_+^2.$ Then we see that
\begin{eqnarray*}
\min_{(x,y) \in \Sigma(f, S) } f(x,y)=0=\lambda_1.
\end{eqnarray*}
So, it follows from Theorem~\ref{Theorem44}, $f$ attains its infimum on $S,$ that is, $0.$
Moreover, by Theorem~\ref{Theorem45}, we see that the set of all optimal solutions of $f$ on $S$ is nonempty, but not compact.
}\end{example}


\end{document}